\numberwithin{equation}{section}
\theoremstyle{plain}
\newtheorem{theorem}{Theorem}[section]
\newtheorem{lemma}[theorem]{Lemma}
\newtheorem{proposition}[theorem]{Proposition}
\theoremstyle{definition}
\newtheorem{remark}[theorem]{Remark}
\title[Asymptotic limits in bipolar Euler-Poisson systems]
      {Zero-electron-mass and quasi-neutral limits \\ for bipolar Euler-Poisson systems}
\author{Nuno J. Alves}
\address[Nuno J. Alves]{
        Faculty of Mathematics, University of Vienna, Oskar-Morgenstern Platz 1, 1090 Vienna, Austria.}
\email{nuno.januario.alves@univie.ac.at}
\author{Athanasios E. Tzavaras}
\address[Athanasios E. Tzavaras]{CEMSE Division, King Abdullah University of Science and Technology, Thuwal, Saudi Arabia, 23955-6900.}
\email{athanasios.tzavaras@kaust.edu.sa}
\subjclass{Primary: 35Q31, 35Q35; Secondary: 35L65, 76W05.}
 \keywords{Bipolar Euler-Poisson, zero-electron-mass limit, quasi-neutral limit, relative energy, Neumann function, Riesz potentials}
\begin{document}

\begin{abstract}
We consider a set of bipolar Euler-Poisson equations and study two asymptotic limiting processes. The first is the zero-electron-mass limit, which
formally results in a non-linear adiabatic electron system. In a second step, we analyse the combined zero-electron-mass and quasi-neutral limits, 
which together lead to the compressible Euler equations. Using the relative energy method, we rigorously justify these limiting processes
for weak solutions of the two-species Euler-Poisson equations that dissipate energy, as well as for strong solutions of the limit systems that are bounded away from vacuum. This justification is valid in the regime of initial data for which strong solutions exist. To deal with the electric potential, in the first case we use elliptic theory, whereas in the second case we employ the theory of Riesz potentials and properties of the Neumann function.
\end{abstract}

\maketitle

\baselineskip 16pt

\section{Introduction}
Consider the following bipolar Euler-Poisson equations:
\begin{equation} \label{BEP}
 \begin{split}
  \partial_t \rho + \nabla \cdot (\rho u) & =  0, \\
  \rho  \partial_t  u + \rho (u \cdot \nabla)  u  & =   - \nabla P_1(\rho) - \rho \nabla \phi, \\
  \partial_t n + \nabla \cdot (n v) & =   0, \\
  \varepsilon \big( n  \partial_t v + n(v \cdot \nabla)v  \big) & =  - \nabla P_2(n) +  n \nabla \phi, \\
  -\delta \Delta \phi & =   \rho - n, \ 
 \end{split}
\end{equation} 
in $\mathopen{]}0,T\mathclose{[} \times \Omega,$ together with the boundary conditions 
\begin{equation*}
u \cdot \nu = v \cdot \nu = \nabla \phi \cdot \nu = 0, 
\end{equation*}
on $\mathopen{[}0,T\mathclose{[} \times \partial \Omega$, where $T>0,$ $\Omega \subseteq \mathbb{R}^d$ is a smooth bounded domain, $d \in \mathbb{N}$ is the spatial dimension, and $\nu$ is an arbitrary outer normal vector to the boundary $\partial \Omega$.
These equations describe the evolution of a two-species fluid composed of positively and negatively charged particles subject to a self-created electric field. It is a basic model which together with its various asymptotic limits is widely used in semiconductor theory or plasma physics \cite{chen1984introduction, freidberg2008plasma, markowich2012semi}. For theoretical results concerning existence of global solutions near equilibrium for the two-fluid Euler-Poisson model we refer to \cite{guo2011global, guo2014global, guo2016global}. 
It was there observed that the electrical interaction can induce strong dispersive effects, enhance linear decay rates, and thus prevent shock formation.  

System (\ref{BEP}) is composed of two compressible Euler systems, with $\rho,n$ denoting the densities, $u,v$ the respective linear velocities, and  $P_1,P_2$ the associated pressures. 
One may regard $\rho$ as the density of positively charged ions, and $n$ as the density of electrons.
On the right-hand-side of the momentum equations one finds the electric part of the Lorenz force.
The Euler equations are coupled via a Poisson equation determining the electric potential $\phi$ and the associated electric field $E = - \nabla \phi$.
The constants $\varepsilon, \delta > 0$ represent the ratio of electron mass to ion mass and the scaled Debye length squared, respectively. 
In plasma physics, the Debye length stands for the scale over which mobile charge carriers screen out electric fields in plasmas and other conductors. 
One may view the Debye length as a length-scale over which significant charge separation occurs \cite{chen1984introduction,freidberg2008plasma}.
One commonly employs various simplified systems derived from the bipolar Euler-Poisson and obtained through various limiting processes.
A class of simplifications occurs when friction is added to \eqref{BEP}; one considers the high-friction limit which leads to the bipolar drift-diffusion equations commonly used in semiconductor modelling \cite{markowich2012semi}. This limit has been  studied extensively, 
{\it e.g.}  \cite{lattanzio1999relaxation,lattanzio2000bipolar,alves2022relaxation} and references therein.

Other limiting processes that lead to simplified systems consist of the zero-electron-mass limit and the quasi-neutral limit. 
These limits are relevant for fast flows which occur in plasmas and are the subjects of study in this work. The zero-electron-mass limit refers to the limit of the two-species Euler-Poisson system (\ref{BEP}) as $\varepsilon \to 0,$ while the quasi-neutral limit is understood to be the limit of the system (\ref{BEP}) as $\delta \to 0.$ 
When  $\varepsilon \to 0$,  the second momentum equation in (\ref{BEP}) formally reduces to $\nabla P_2(n) =  n \nabla \phi$. Accordingly,
 system (\ref{BEP}) reduces to 
\begin{equation} \label{AE}
 \begin{split}
  \partial_t \rho + \nabla \cdot (\rho u) & = 0, \\
  \rho  \partial_t  u + \rho (u \cdot \nabla)  u  & = - \nabla P_1(\rho) - \rho \nabla H_2^\prime(n),  \\
  -\delta \Delta H_2^\prime(n) + n & = \rho, 
  \end{split}
\end{equation}
while the boundary conditions become
\begin{equation*}
u \cdot \nu  = \nabla H_2^\prime(n) \cdot \nu = 0, 
\end{equation*}
where $H_2 = H_2(n)$ satisfies  $P_2^\prime(n) = n H_2^{\prime \prime}(n).$ Going back to the relation $\nabla P_2(n) =  n \nabla \phi$ and 
considering $P_2(n) = n$,  one obtains $n = n_0 e^{ \phi},$
which is the case for an adiabatic electron system \cite{horton1999drift, jorge2018theory}, and is also known as the Boltzmann relation \cite{guo2011global}, where $n_0$ is the initial data. Thus, (\ref{AE}) can be regarded as a non-linear version of the standard adiabatic electron system. To the best of the authors' knowledge, an existence theory for (\ref{AE}) is not yet available in the literature, therefore representing a subject for further investigation. \par
Furthermore, in the combined limit $(\varepsilon,\delta) \to (0,0)$ one formally obtains the following Euler equations 
\begin{equation} \label{E}
 \begin{split}
  \partial_t \rho + \nabla \cdot (\rho u) & = 0, \\
  \rho  \partial_t  u + \rho (u \cdot \nabla)  u  & = - \nabla\big( P_1(\rho) + P_2(\rho) \big), 
  \end{split}
\end{equation}
with the boundary condition $u \cdot \nu = 0$. Regarding existence theories for Euler systems we refer to \cite{schochet1986compressible, hou2017global}. \par 
A rigorous justification of the previous limiting processes is the intent of this work. Both limits have been investigated in some contexts. In the case where the limit system is a incompressible Euler system, we refer to \cite{loeper2005quasi} for the quasi-neutral limit of a single-species Euler-Poisson system in a periodic setting, and to \cite{xu2013zero} for the zero-electron-mass limit of a two-species Euler-Poisson system in several space dimensions. Regarding the case where the limit system is a model for compressible fluids, we refer to \cite{peng2022global} and to \cite{ju2019quasineutral} for the quasi-neutral limit of a two-species Euler-Poisson system towards a single-species Euler system, in one and three spatial dimensions, respectively.
In all of the aforementioned works, the analysis is conducted in a framework of smooth solutions with initial data close to an equilibrium state. \par
In the sequel, we consider weak solutions for the bipolar Euler-Poisson equations (\ref{BEP}), which are reasonable in light of the a priori energy estimates of the system. The relative energy method is used to analyse the two limits of (\ref{BEP}), first towards (\ref{AE}) when $\varepsilon \to 0,$ and then towards (\ref{E}) as $\varepsilon,  \delta \to 0$, in both cases for strong solutions of the limiting systems. This method provides an efficient framework for limiting processes and stability analysis \cite{tzavaras2005relative, lattanzio2013relative}, {\it e.g.},  for  the relaxation limit of Euler and Euler-Poisson systems \cite{lattanzio2017gas, carrillo2020relative, alves2022relaxation}. The present work thus extends the applicability of the relative energy method in order to include other types of asymptotic limits relevant in plasma physics but not necessarily related to relaxation. The zero-electron-mass limit is considered when the mass of ions is much heavier than the mass of electrons, whereas the quasi-neutral limit refers to a plasma in which the Debye length is negligible in comparison to the characteristic length of the system. The Debye length defines the length scale on which the electric effects created by the particles in the plasma are detected. At length scales bigger than the Debye length one observes an overall electric neutrality. \par 
The bipolar Euler-Poisson equations (\ref{BEP}) are formally analysed in Section \ref{sectionBEP}. One starts by deriving (\ref{BEP}) using non-dimensional analysis. Secondly, one formally obtains the energy identity for the system and conducts an asymptotic expansion. The local form of the relative energy identity for the system is also given, and the section is completed with the notion of weak solutions for (\ref{BEP}) which will be used in the subsequent analysis. Then, one proceeds with the relative energy method to establish the limits of (\ref{BEP}) towards (\ref{AE}) and (\ref{E}), this being the content of Section \ref{zem} and Section \ref{zemqn}, respectively. This approach relies on an inequality satisfied by relative energy for the bipolar Euler-Poisson equations, where solutions of (\ref{BEP}) are either compared with solutions of (\ref{AE}) or of (\ref{E}); see Theorem \ref{AEmaintheorem} and Theorem \ref{maintheorem}. In order to perform such a comparison, one regards the solutions of (\ref{AE}) and (\ref{E}) as approximate solutions of (\ref{BEP}); see (\ref{approxAE}) and (\ref{approxE}).

\section{The bipolar Euler-Poisson system} \label{sectionBEP}
\subsection{Nondimensionalization}
As the starting point of this analysis, consider the following bipolar Euler-Poisson equations with dimensional constituents:
\begin{equation} \label{dimBEP}
\begin{split}
\partial_t n_i + \nabla \cdot (n_iu_i) & = 0, \\
\partial_t (m_i n_i u_i) + \nabla \cdot (m_i n_i u_i \otimes u_i) & = - \nabla P_i - q_i n_i \nabla \phi,\\
\partial_t n_e + \nabla \cdot (n_eu_e) & = 0, \\
\partial_t (m_e n_e u_e) + \nabla \cdot (m_e n_e u_e \otimes u_e) & = - \nabla p_e - q_e n_e \nabla \phi, \\
-\varepsilon_0 \Delta \phi & = q_i n_i + q_e n_e. \\
\end{split}
\end{equation}
The equations above describe the evolution of an unmagnetized plasma composed of two-species of charged particles, {\it e.g.}, ions and electrons. In what follows, the S.I. units of the quantities are placed in between parenthesis . For a species $j = i,e$, the density is denoted by $n_j  \ (m^{-3}),$ the linear velocity is represented by $u_j \ (m \cdot s^{-1}),$ $p_j \ (kg \cdot m^{-1} \cdot s^{-2})$ is the pressure, $m_j \ (kg)$ is the mass and $q_j \ (s \cdot A)$ is the charge. The electric potential created by the charged particles is denoted by $\phi \ (kg \cdot m^2  \cdot s^{-3} \cdot A^{-1}) .$  The permittivity and permeability of free space are denoted by $\varepsilon_0 \ (kg^{-1} \cdot m^{-3} \cdot s^4 \cdot A^2)$ and $\mu_0 \ (kg \cdot m \cdot s^{-2} \cdot A^{-2} ),$ respectively. It holds that $\varepsilon_0 \mu_0 = 1/c^2,$ with $c$ being the speed of light.  \par 
Let $e$ be the elementary charge and assume for simplicity that $q_i = - q_e = e.$ This is the case of plasmas that consist in hydrogen, deuterium, or a mix of deuterium and tritium \cite{freidberg2008plasma}. Consider the scaling:
\[x=L x^\prime  , \quad t = \tau_0 t^\prime  , \]
\[n_j = N_0 n_j^\prime  , \quad u_j = v_0 u_j^\prime  , \quad p_j = \kappa_B T_0 N_0 p_j^\prime  , \quad j=i,e  , \]
\[\phi = \frac{\kappa_B T_0}{e} \phi^\prime  , \]
where $v_0=L/\tau_0,$ $\kappa_B$ is the Boltzmann constant and $T_0$ is the temperature of the system. \par 
After dropping the primes, setting $\rho = n_i,$ $n = n_e,$ $u = u_i,$ $v=u_e, $ $P_i = P_1, $ $p_e = P_2,$ and assuming that the pressures depend solely on the densities, one obtains the non-dimensional version of (\ref{dimBEP}):
\begin{equation*} 
\begin{split}
\partial_t \rho + \nabla \cdot (\rho u) & = 0, \\
\zeta \big( \rho \partial_t  u + \rho (u \cdot \nabla )u  \big)  & = - \nabla P_1(\rho) - \rho \nabla \phi,\\
\partial_t n + \nabla \cdot (n v) & = 0, \\
\varepsilon \big( n \partial_t  v + n (v \cdot \nabla )v \big)  & = - \nabla P_2(n) + n \nabla \phi,\\
- \delta \Delta \phi & = \rho - n, 
\end{split}
\end{equation*}
where \[\zeta = \frac{m_i v_0^2}{\kappa_B T_0} ,  \quad \varepsilon =  \frac{m_e v_0^2}{\kappa_B T_0}  , \quad \delta = \frac{\lambda_D^2}{L^2} = \frac{\varepsilon_0 \kappa_B T_0}{e^2 L^2 N_0}  , \]
with $\lambda_D$ being the Debye length. \par
Note that $\zeta = 1$ is equivalent to $\varepsilon = m_e/m_i,$ the ratio of electron mass to ion mass, yielding system (\ref{BEP}).

\subsection{Pressure and internal energy}
One introduces the internal energy functions $H_1, H_2$ which are related with the pressures $P_1,P_2$ via
\begin{equation} \label{thermorelations}
rH_i^{\prime \prime}(r) = P_i^\prime(r)  , 
\end{equation}
for $r>0$ and $i=1,2.$
Integrating the identity above yields that
\begin{equation*}
rH_i^{\prime}(r) = H_i(r) + P_i(r)   .
\end{equation*} \par 
The following assumptions are placed on the pressure functions:
 \[ P_i \in C^2(]0, \infty[) \cap C([0,\infty[)  , \]   \[ P_i^\prime(r) > 0  , \quad 
|r P_i^{\prime \prime}(r)| \leq \hat{k}_i P_i^\prime(r)  , 
\]
for $r>0.$ As for the internal energy functions one assumes that
\[ H_i \in C^3(]0,\infty[)\cap C([0,\infty[)  , \] 
\[
\lim\limits_{r \to \infty} r^{-\gamma_i} H_i(r) = k_i(\gamma_i -1)^{-1}  ,
\]
for some $\gamma_i > 1$ and $k_i,\hat{k}_i > 0, \ i=1,2$.
As an example of a pair of functions satisfying the above conditions consider
\[p(r) =  r^\gamma , \quad h(r)= (\gamma -1)^{-1}r^\gamma , \quad \gamma > 1  . \]   
\subsection{Energy identity and asymptotics}
In this section one derives, by exact formal computation, the energy identity for (\ref{BEP}) and produces an asymptotic analysis to make a preliminary justification of the appearance of (\ref{E}) from (\ref{BEP}) in the combined limit.  \par 
Let $(\rho, u, n , v, \phi)$ solve (\ref{BEP}). Taking the inner product of the second and fourth equations of (\ref{BEP}) with $u$ and $v,$ respectively, and using the continuity equations yields
\begin{equation*}
\partial_t \big(\tfrac{1}{2} \rho |u|^2 + H_1(\rho) \big) + \nabla \cdot \big(\tfrac{1}{2} \rho |u|^2u + \rho H_1^\prime(\rho)u \big) +\rho u \cdot \nabla \phi = 0 ,
\end{equation*}
and
\begin{equation*}
\partial_t \big(\varepsilon \tfrac{1}{2} n |v|^2 + H_2(n) \big) + \nabla \cdot \big(\varepsilon\tfrac{1}{2} n |v|^2v + n H_2^\prime(n)v \big) - nv \cdot \nabla \phi = 0 . 
\end{equation*}
Adding together the two previous equations and using the Poisson equation results in the local form of the energy identity of (\ref{BEP}):
\begin{equation} \label{energylocalformBEP}
\partial_t \big( \tfrac{1}{2} \rho |u|^2 + H_1(\rho) + \varepsilon \tfrac{1}{2} n |v|^2 + H_2(n) + \delta \tfrac{1}{2} |\nabla \phi|^2 \big) + \nabla \cdot \mathcal{F}_1 = 0  , 
\end{equation}
where 
\[\mathcal{F}_1 = \mathcal{F}_1(\rho, u, n, v ,\phi) = \tfrac{1}{2} \rho |u|^2u + \varepsilon\tfrac{1}{2} n |v|^2v +\rho H_1^\prime(\rho)u +  n H_2^\prime(n)v + \phi(\rho u - nv) - \delta \phi \nabla \partial_t \phi \]
is the flux of the energy. Taking $\varepsilon = 0$ and $\phi = H_2^\prime(n)$ in (\ref{energylocalformBEP}) yields the local form of the energy identity for (\ref{AE}), 
\begin{equation} \label{energylocalformAE}
\partial_t \big(\tfrac{1}{2} \rho |u|^2 + H_1(\rho) + H_2(n) + \delta \tfrac{1}{2} |\nabla H_2^\prime (n)|^2 \big)  + \nabla \cdot \mathcal{F}_{2} = 0  ,
\end{equation}
where 
\[ \mathcal{F}_{2} = \mathcal{F}_{2}(\rho, u , n) = \tfrac{1}{2} \rho |u|^2u  +\rho H_1^\prime(\rho)u +  \rho H_2^\prime(n)u - \delta H_2^\prime (n) \nabla \partial_t H_2^\prime (n) \]
is the flux.
Furthermore, taking $\varepsilon = \delta = 0,$ $\rho = n$ and $u=v$ in (\ref{energylocalformBEP}) gives 
\begin{equation*} \label{energylocalformE}
\partial_t \big(\tfrac{1}{2} \rho |u|^2 + H_1(\rho) + H_2(\rho) \big)  + \nabla \cdot \mathcal{F}_3 = 0 ,
\end{equation*}
which is the local form of the energy identity for system (\ref{E}), with \[\mathcal{F}_3 = \mathcal{F}_3(\rho, u) = \tfrac{1}{2} \rho |u|^2u  +\rho H_1^\prime(\rho)u +  \rho H_2^\prime(\rho)u  . \] 
\par For solutions $(\rho, u, n, v, \phi),$ $(\rho, u ,n)$ and $(\rho, u)$ of (\ref{BEP}), (\ref{AE}) and (\ref{E}), respectively, the respective total energies $\mathcal{H}_0$, $\mathcal{H}_*$ and $\mathcal{H}_E$ are defined as follows:
\begin{equation} \label{energyBEP}
\mathcal{H}_0 = \mathcal{H}_0(\rho, u, n, v, \phi) =  \tfrac{1}{2} \rho |u|^2 + H_1(\rho) + \varepsilon \tfrac{1}{2} n |v|^2 + H_2(n) + \delta \tfrac{1}{2} |\nabla \phi|^2 , 
\end{equation} 
\begin{equation} \label{energyAE}
\mathcal{H}_* = \mathcal{H}_*(\rho, u, n) = \tfrac{1}{2} \rho |u|^2 + H_1(\rho) + H_2(n) + \delta \tfrac{1}{2} |\nabla H_2^\prime (n)|^2  ,
\end{equation} 
and
\begin{equation} \label{energyE}
\mathcal{H}_E = \mathcal{H}_E(\rho, u) = \tfrac{1}{2} \rho |u|^2 + H_1(\rho) + H_2(\rho)  .
\end{equation} 
Now, one performs an asymptotic analysis to the bipolar Euler-Poisson equations. Set \[m = \rho u  , \quad \mu = nv  , \] 
rewrite (\ref{BEP}) as follows
\begin{equation} \label{BEPm}
 \begin{aligned}
  \partial_t \rho + \nabla \cdot m = 0, \\
  \partial_t m + \nabla \cdot \Big(\frac{m \otimes m}{\rho}\Big) + \nabla P_1(\rho) + \rho \nabla \phi = 0, \\
  \partial_t n + \nabla \cdot \mu = 0, \\
  \varepsilon \Big(\partial_t \mu + \nabla \cdot \Big(\frac{\mu \otimes \mu}{n}\Big) \Big)+ \nabla P_2(n) -  n \nabla \phi = 0,\\
  -\delta \Delta \phi = \rho - n, 
 \end{aligned}
\end{equation} 
and introduce the asymptotic expansions in $\varepsilon:$
\begin{equation*} \label{expansion}
\begin{split}
\rho & = \rho_{0} + \varepsilon \rho_{1} + \varepsilon^2 \rho_{2} + \dots  , \\
m & = m_0 + \varepsilon m_1 + \varepsilon^2 m_2 + \dots  ,\\
n & = n_0 + \varepsilon n_1 + \varepsilon^2 n_2 + \dots  , \\
\mu & = \mu_0 + \varepsilon \mu_1 + \varepsilon^2 \mu_2 + \dots  , \\
\phi & = \phi_0 + \varepsilon \phi_1 + \varepsilon^2 \phi_2 + \dots  .\\
\end{split}
\end{equation*}
Expanding each one of the terms of (\ref{BEPm}), after gathering the $\mathcal{O}(1)$ terms it holds that 
\begin{equation*} 
 \begin{aligned}
  \partial_t \rho_0 + \nabla \cdot m_0 = 0, \\
  \partial_t m_0 + \nabla \cdot \Big(\frac{m_0 \otimes m_0}{\rho_0}\Big) + \nabla (\rho_0) + \rho_0 \nabla \phi_0 = 0, \\
  \partial_t n_0 + \nabla \cdot \mu_0 = 0, \\
 \nabla P_2(n_0) =  n_0 \nabla \phi_0, \\
  -\delta \Delta \phi_0 = \rho_0 - n_0,  
 \end{aligned}
\end{equation*} 
which can be solved by first obtaining $\rho_0, m_0, n_0$ from 
\begin{equation} \label{BEP0}
 \begin{aligned}
  \partial_t \rho_0 + \nabla \cdot m_0 = 0, \\
  \partial_t m_0 + \nabla \cdot \Big(\frac{m_0 \otimes m_0}{\rho_0}\Big) + \nabla P_1(\rho_0) + \rho_0 \nabla H_2^\prime(n_0) = 0,  \\
  -\delta \Delta H_2^\prime(n_0) + n_0 = \rho_0, 
  \end{aligned}
\end{equation}
and then setting 
\[\phi_0 = H_2^\prime(n_0) , \quad \mu_0 = m_0 - \delta \nabla \partial_t \phi_0 .\]
\par  
Furthermore, expanding each one of the terms in the energy identity (\ref{energylocalformBEP}) one obtains that $(\rho_0, m_0, n_0, \mu_0, \phi_0)$ satisfy the following energy identity:
\begin{equation} \label{energylocalformBEP0}
\partial_t \Big( \frac{1}{2} \frac{|m_0|^2}{\rho_0} + H_1(\rho_0) + H_2(n_0) + \delta \frac{1}{2}|\nabla \phi_0|^2 \Big) + \nabla \cdot \mathcal{F}_0 = 0  ,
\end{equation}
where 
\[\mathcal{F}_0  =  \frac{1}{2} \frac{|m_0|^2}{\rho_0^2}m_0 + H_1^\prime(\rho_0)m_0 + H_2^\prime(n_0) \mu_0 + \phi_0 (m_0- \mu_0) - \delta \phi_0 \nabla \partial_t \phi_0  . \]
Next, one makes a subsidiary expansion in $\delta$ of $(\rho_0, m_0, n_0),$
\begin{equation*} \label{expansion2}
\begin{split}
\rho_0 & = \rho_{00} + \delta \rho_{01} + \delta^2 \rho_{02} + \dots  , \\
m_0 & = m_{00} + \delta m_{01} + \delta^2 m_{02} + \dots  ,\\
n_0 & = n_{00} + \delta n_{01} + \delta^2 n_{02} + \dots  , \\
\end{split}
\end{equation*}
and plugs it in (\ref{BEP0}). Gathering the leading order terms one obtains that they satisfy 
\begin{equation*} \label{BEP00}
 \begin{aligned}
  \partial_t \rho_{00} + \nabla \cdot m_{00} = 0, \\
  \partial_t m_{00} + \nabla \cdot \Big(\frac{m_{00} \otimes m_{00}}{\rho_{00}}\Big) + \nabla P_1(\rho_{00}) + \rho_{00} \nabla H_2^\prime(n_{00}) = 0,  \\
  n_{00} = \rho_{00}, 
  \end{aligned}
\end{equation*}
which is equivalent to (\ref{E}) by (\ref{thermorelations}). Finally, one expands the terms in the energy identity (\ref{energylocalformBEP0}) and gathers the leading order terms to obtain 
\begin{equation} \label{energylocalformBEP00}
\partial_t \Big( \frac{1}{2} \frac{|m_{00}|^2}{\rho_0} + H_1(\rho_{00}) + H_2(n_{00}) \Big) + \nabla \cdot \mathcal{F}_{00} = 0 ,
\end{equation}
where 
\[\mathcal{F}_{00}  =  \frac{1}{2} \frac{|m_{00}|^2}{\rho_0^2}m_{00} + H_1^\prime(\rho_{00})m_{00} + H_2^\prime(n_{00}) \mu_{00} + \phi_{00} (m_{00}- \mu_{00}) . \]
Since $\mu_{00} = m_{00},$ one has that identity (\ref{energylocalformBEP00}) is in fact the local form of the energy identity for system (\ref{E}).
\subsection{Relative energy identity}
This section presents the relative energy identity for solutions of (\ref{BEP}). The derivation is conducted via an exact formal calculation, similar with the formal computations in \cite{alves2022relaxation}. Given a pair of smooth solutions, $(\rho,  u, n , v, \phi)$ and $(\bar \rho,  \bar u, \bar n,  \bar v, \bar \phi),$ of (\ref{BEP}), one uses the relative energy to compare them. In this case, the relative energy $\hat{\mathcal{H}}$ takes the form
\begin{equation} \label{relativeenergy}
\begin{split}
\hat{\mathcal{H}} & = \hat{\mathcal{H}}(\rho, u , n , v, \phi | \bar \rho,  \bar u, \bar n,  \bar v, \bar \phi) \\
& = \tfrac{1}{2} \rho |u-\bar u|^2 + H_1(\rho | \bar \rho) +\varepsilon \tfrac{1}{2} n |v - \bar v|^2 + H_2(n | \bar n) + \delta \tfrac{1}{2} |\nabla (\phi - \bar \phi)|^2  ,
\end{split}
\end{equation}
where for a function $F=F(\rho)$, $F(\rho | \bar \rho)$ is defined as
\begin{equation*} 
F(\rho | \bar \rho) = F(\rho) - F(\bar \rho) - F^\prime(\bar \rho)(\rho - \bar \rho)  .
\end{equation*} 
Taking the time derivative of $\hat{\mathcal{H}}$ renders the local form of relative energy identity for (\ref{BEP}):
\begin{equation} \label{localformrelenergyidentity}
\partial_t \hat{\mathcal{H}} + \nabla \cdot \hat{\mathcal{F}} = \hat{\Sigma}_1 + \hat{\Sigma}_2 + \hat{\Sigma}_*  ,
\end{equation}
where the relative flux $\hat{\mathcal{F}}$ is given by 
\[
\begin{split}
\hat{\mathcal{F}}  = \ &   \tfrac{1}{2} \rho |u - \bar u|^2u +\rho \big( H_1^\prime(\rho) - H_1^\prime(\bar \rho) \big)(u-\bar u) + H_1(\rho | \bar \rho) \bar u \\
& + \varepsilon\tfrac{1}{2} n |v - \bar v |^2v +  n \big( H_2^\prime(n) - H_2^\prime(\bar n) \big) (v - \bar v) + H_2(n | \bar n) \bar v \\
& + (\phi - \bar \phi)(\rho u - nv - \bar \rho \bar u + \bar n \bar v) - \delta (\phi - \bar \phi) \nabla \partial_t (\phi - \bar \phi) ,
\end{split}
\]
and the remaining terms $\hat{\Sigma}_i, \ i = 1,2,*$ are as follows
\begin{equation} \label{RHSterms}
\begin{split}
\hat{\Sigma}_1 & = -\rho \nabla \bar u : (u - \bar u) \otimes (u - \bar u) - \varepsilon n \nabla \bar v :  (v - \bar v) \otimes (v - \bar v)  , \\
\hat{\Sigma}_2 & = -(\nabla \cdot \bar u)P_1(\rho | \bar \rho)  - (\nabla \cdot \bar v)P_2(n | \bar n)   , \\
\hat{\Sigma}_* & = \big( (\rho - \bar \rho) \bar u - (n - \bar n) \bar v   \big) \cdot \nabla (\phi - \bar \phi) .
\end{split}
\end{equation}
The first term above, $\hat{\Sigma}_1$, originates from the contribution of the relative kinetic energy, while the other two terms, $\hat{\Sigma}_2$ and $\hat{\Sigma}_3$, appear from the evolution of the relative potential energy. This computation can be carried out either by direct exact calculation, considering the system of equations satisfied by the difference of the two solutions, or by employing a functional abstract formalism; for details see \cite{alves2022relaxation}. 
\par
The relative energy identity for (\ref{BEP}) is then obtained by integrating the identity (\ref{localformrelenergyidentity}) over space:
\[\frac{d}{dt} \Phi =  \int_\Omega \hat{\Sigma}_1 + \hat{\Sigma}_2 + \hat{\Sigma}_*  \, dx  ,\] 
where $\Phi$ is defined as:
\begin{equation} \label{Phi}
\Phi = \Phi(t) =  \int_\Omega  \hat{\mathcal{H}}(\rho, u , n , v, \phi | \bar \rho,  \bar u, \bar n,  \bar v, \bar \phi) \, dx  ,
\end{equation} 
where $\hat{\mathcal{H}}$ is as (\ref{relativeenergy}).
This function serves as a tool for comparing solutions of (\ref{BEP}) with solutions of (\ref{AE}) and (\ref{E}) in the subsequent sections of the manuscript.
\subsection{Dissipative weak solutions} \label{weaksolutions}
Here, the notion of weak solutions to the equations (\ref{BEP}) is prescribed.  
A tuple $(\rho,u,n,v, \phi)$ such that the densities $\rho,n$ are non-negative and
 \[\rho \in C\big([0,T[;  L^{\gamma_1}(\Omega)\big)  , \qquad  \ n \in C\big([0,T[; L^{\gamma_2}(\Omega)\big)  , \qquad \gamma = \min\{\gamma_1, \gamma_2 \} > 1  ,\]
the momenta $\rho u, nv$ belong to $ C\big([0,T[;L^1(\Omega, \mathbb{R}^d)\big),$
and additionally
  \[\rho |u|^2, \ n|v|^2, \ \rho \nabla \phi, \ n \nabla \phi \in L^1(]0,T[ \times \Omega)  , \]
where $\phi$ solves $-\delta \Delta \phi = \rho - n,$ is a weak solution of (\ref{BEP}) if:
\begin{enumerate}[(i)]
\item $(\rho,u,n,v, \phi)$ satisfies (\ref{BEP}) in the following weak sense:
 \begin{equation*} \label{weak1}
         -\int_0^T \int_\Omega (\partial_t \varphi) \rho \, dxdt-\int_0^T \int_\Omega \nabla \varphi \cdot (\rho u) \, dxdt - \int_\Omega \varphi \rho \big|_{t=0}\, dx=0  ,
        \end{equation*} \\
 \begin{equation*} \label{weak2}
        \begin{split}
         - &  \int_0^T \int_\Omega \partial_t \tilde{\varphi} \cdot (\rho u) \, dxdt -  \int_0^T \int_\Omega \nabla \tilde{\varphi} : \rho u \otimes u \, dx dt \\
        & -  \int_0^T \int_\Omega (\nabla \cdot \tilde{\varphi}) p_1(\rho) \,  dxdt
        - \int_\Omega \tilde{\varphi} \cdot (\rho u)\big|_{t=0} \, dx \\
        = &   -  \int_0^T \int_\Omega \tilde{\varphi}\cdot (\rho \nabla \phi) \, dxdt ,
        \end{split}
        \end{equation*} \\
\begin{equation*} \label{weak3}
         -\int_0^T \int_\Omega (\partial_t \psi) n \, dxdt-\int_0^T \int_\Omega \nabla \psi \cdot (nv) \, dxdt - \int_\Omega \psi n \big|_{t=0} \, dx=0 ,
        \end{equation*} \\
 \begin{equation*} \label{weak4}
        \begin{split}
         - &  \int_0^T \int_\Omega \varepsilon \partial_t \tilde{\psi} \cdot (nv) \, dxdt -  \int_0^T \int_\Omega \varepsilon \nabla \tilde{\psi} :  nv \otimes v \, dx dt \\ & -  \int_0^T \int_\Omega (\nabla \cdot \tilde{\psi}) p_2(n) \, dxdt
         -  \int_\Omega\varepsilon \tilde{\psi} \cdot (nv)\big|_{t=0} \, dx \\
         = &    \int_0^T \int_\Omega \tilde{\psi} \cdot (n \nabla \phi) \, dxdt ,
        \end{split}
        \end{equation*}
for every Lipschitz test functions $\varphi, \psi : \mathopen{[}0,T\mathclose{[} \times \bar{\Omega} \to \mathbb{R}, \ \tilde{\varphi}, \tilde{\psi}:\mathopen{[}0,T\mathclose{[} \times \bar{\Omega} \to \mathbb{R}^d$ compactly supported in time, and such that $\tilde{\varphi} \cdot \nu = \tilde{\psi} \cdot \nu = 0 $ on $\mathopen{[}0,T\mathclose{[} \times \partial \Omega$ for any outer normal vector to the boundary $\nu$,
    \item $(\rho,u,n,v, \phi)$ conserves mass: 
    \begin{equation*} \label{massconservation}
 \int_\Omega \rho \, dx = \int_\Omega n \, dx = M < \infty ,  \quad   0 \leq t < T  ,
  \end{equation*}
  \item  $(\rho,u,n,v, \phi)$ has finite total energy:
  \begin{equation*} \label{energyfinitude}
  \underset{[0,T[}{\text{sup}} \int_\Omega  \mathcal{H}_0(\rho, u , n , v, \phi)  \, dx < \infty ,
 \end{equation*}
 \end{enumerate}
where $\mathcal{H}_0$ is given by (\ref{energyBEP}).
Solutions of (\ref{BEP}) are dependent on $\varepsilon$ and $\delta$, {\it i.e.},  \[(\rho, u,n,v,\phi)=(\rho_{\varepsilon,\delta}, u_{\varepsilon,\delta},n_{\varepsilon,\delta}, v_{\varepsilon,\delta}, \phi_{\varepsilon, \delta}) .\]Nevertheless, we omit this dependence for simplicity. \par  
Moreover, a weak solution $(\rho,u,n,v)$ of (\ref{BEP}) is dissipative if $\sqrt{\rho} u, \sqrt{n} v, \nabla \phi \in C\big([0,T[; L^2(\Omega, \mathbb{R}^d) \big) $ and 
 \begin{equation} \label{weakdissip}
-  \int_0^T   \int_\Omega \mathcal{H}_0(\rho, u , n , v, \phi) \ \dot{\theta} \, dxdt  \leq   \int_\Omega  \mathcal{H}_0(\rho, u , n , v, \phi) \ \theta \,  dx \Big|_{t=0}
  \end{equation} \\
  for all compactly supported and non-negative $\theta$ belonging to $W^{1,\infty}([0,T[)$. \par Fixing $t \in [0,T[$ and choosing $\theta$ above as
\begin{equation} \label{theta}
\theta(s) = 
\begin{dcases}
1 & \quad \text{for} \ 0 \leq s < t, \\
\frac{t-s}{\tau} + 1 & \quad \text{for} \ t \leq s < t + \tau, \\
0 & \quad \text{for} \ t+\tau \leq s < T ,
\end{dcases} 
\end{equation}
for some positive $ \tau$ such that $\tau + t < T,$ after letting $\tau \to 0$ expression (\ref{weakdissip}) becomes
\begin{equation*} \label{weakenergy}
\int_\Omega \mathcal{H}_0(\rho, u , n , v, \phi)  \, dx \Big|_{s=0}^{s=t} \leq 0  ,
\end{equation*}
{\it i.e.}, the energy is a non-increasing function of time.
\section{Zero-electron-mass limit} \label{zem}
In this section, one evaluates the limit of (\ref{BEP}) towards (\ref{AE}) as $\varepsilon \to 0.$ For simplicity, consider $\delta = 1.$ \par  First, one describes the notion of a strong solution for (\ref{AE}). A bounded Lipschitz triplet $(\bar \rho, \bar u, \bar n)$ with positive densities is a strong solution of $(\ref{AE})$ if:
\begin{enumerate}[(i)]
\item $(\bar \rho, \bar u, \bar n)$ satisfies 
\begin{equation*} \label{strongAE}
\begin{split}
\partial_t \bar \rho + \nabla \cdot (\bar \rho \bar u) & = 0, \\
\partial_t \bar u + \bar u \cdot \nabla \bar u & = - \nabla \big( H_1^\prime (\bar \rho)+ \bar H_2^\prime(\bar n) \big), \\
- \Delta H_2^\prime(\bar n) + \bar n  & =  \bar \rho ,
\end{split}
\end{equation*}
for almost every $(t,x) \in \mathopen{]}0,T\mathclose{[} \times \Omega$ and \[\bar u \cdot \nu = \nabla H_2^\prime(\bar n) \cdot \nu  =  0 \ \text{on} \  \mathopen{[}0,T\mathclose{[} \times \partial \Omega  , \]
\item for each $t \in [0,T[$ and each $\kappa > 0$ such that $t + \kappa < T,$ the functions $\varphi, \tilde{\varphi}$ given by \[\varphi = \big(-\tfrac{1}{2}|\bar u|^2 + H_1^\prime(\bar \rho) +  H_2^\prime(\bar n) \big) \theta , \quad \tilde{\varphi} =  \bar u \theta \]
are Lipschitz continuous, where $\theta$ is given by (\ref{theta}).
\end{enumerate}
Regarding the initial conditions, a strong solution $(\bar \rho, \bar u, \bar n)$ is assumed to originate from initial data $(\bar \rho_0, \bar u_0, \bar n_0)$ that satisfy 
\begin{equation*} \label{AEstronginitialbounds1}
\int_\Omega \bar \rho_0 \, dx = \int_\Omega \bar n_0 \, dx = \bar M < \infty  ,
\end{equation*}
\begin{equation} \label{AEstronginitialbounds2}
 \int_\Omega \mathcal{H}_*(\bar \rho_0, \bar u_0, \bar n_0) \, dx < \infty  ,
\end{equation}
where $\mathcal{H}_*$ is as (\ref{energyAE}).
Moreover, for $i,j = 1, \ldots, d,$ the derivatives 
\begin{equation*}
\begin{split}
& \partial_{x_i} \bar \rho  , \quad \partial_t \bar \rho  , \quad \partial^2_{x_i x_j} \bar \rho  , \\ 
& \partial_{x_i} \bar u  , \quad \partial_t \bar u , \quad \partial^2_{x_i x_j} \bar u  , \\
& \partial_{x_i} \bar n  , \quad \partial_t \bar n  , \quad \partial^2_{x_i x_j} \bar n \ , \quad \partial_t \partial_{x_i} \bar n  , \quad \partial^2_t \bar n  , \quad \partial_t \partial^2_{x_i x_j} \bar n  , \quad \partial^2_t \partial_{x_i} \bar n  , 
\end{split}
\end{equation*}
are assumed to be bounded in $\mathopen{[}0,T\mathclose{[} \times \Omega.$ \par 
A strong solution of (\ref{E}) satisfies the energy equation:
\begin{equation*} \label{AEstrongenergypreweakform}
\frac{d}{dt} \int_\Omega \mathcal{H}_*(\bar \rho, \bar u, \bar n) \, dx = 0  .
\end{equation*}
This equation is obtained by integrating expression (\ref{energylocalformAE}), and it implies that (\ref{AEstronginitialbounds2}) holds for any $t \in \mathopen{[}0,T\mathclose{[}.$ \par
Furthermore, one considers strong solutions that are bounded away from vacuum, {\it i.e.},  
\begin{equation} \label{AEboundawayzero}
A_1 \leq \bar \rho \leq B_1  , \quad  A_2 \leq \bar n \leq  B_2  , 
\end{equation}
for some $A_1,A_2 > 0$ and $B_1, B_2 < \infty$.
\par 
In this section, using the relative energy method, one compares a weak solution $(\rho,  u, n, v, \phi)$ of (\ref{BEP}) satisfying (\ref{weakdissip}) with a strong solution $(\bar \rho,  \bar u, \bar n)$ of (\ref{AE}) satisfying (\ref{AEboundawayzero}). In order to do so, one regards the strong solution of (\ref{AE}) as an approximate solution of (\ref{BEP}) by placing 
\begin{equation*} \label{AEapproxquantit}
\begin{split}
  \bar v &= \frac{ \bar \rho \bar u - \nabla \partial_t H_2^\prime(\bar n)}{\bar n} \ , \\ \bar \phi &=  H_2^\prime(\bar n)  .
  \end{split}
\end{equation*}
One easily checks that $ \partial_t \bar n + \nabla \cdot (\bar n \bar v) = 0$ and $\bar n \bar v \cdot \nu = 0$ on $ \mathopen{[}0,T\mathclose{[} \times \partial \Omega$. \par
Thus, the strong solution, henceforth denoted by $(\bar \rho,  \bar u, \bar n,  \bar v, \bar \phi ),$ satisfies the following approximate equations:
\begin{equation} \label{approxAE}
 \begin{split}
  \partial_t \bar \rho + \nabla \cdot (\bar \rho \bar u) & = 0, \\
  \bar \rho \partial_t  \bar u +\bar \rho  (\bar u \cdot \nabla) \bar u & = - \nabla P_1(\bar \rho) - \rho \nabla \bar \phi, \\
  \partial_t \bar n + \nabla \cdot (\bar n \bar v) & = 0, \\
  \varepsilon \big(\bar n \partial_t  \bar v +\bar n  (\bar v \cdot \nabla) \bar v\big) & = - \nabla P_2(\bar n) + \bar n \nabla \bar \phi + \varepsilon \bar e,\\ 
  - \Delta \bar \phi & = \bar \rho - \bar n, 
 \end{split}
\end{equation} 
where the error $\bar e$ is given by
\[\bar e = \bar n \partial_t  \bar v +\bar n  (\bar v \cdot \nabla) \bar v  .\] 
\par 
Recall the relative energy $\Phi$ given by (\ref{Phi}). The main result of this section is a stability estimate for $\Phi$ evaluated in a weak solution of (\ref{BEP}) and a strong solution of (\ref{AE}). The proof will be given in the subsequent two sections of the manuscript.
\begin{theorem} \label{AEmaintheorem}
Assume that $d \geq 2$. Given $(\rho, u,n, v, \phi),$ a weak solution of (\ref{BEP}) satisfying (\ref{weakdissip}) and with $\gamma \geq 2 - \tfrac{1}{d},$ and given
$(\bar{\rho},  \bar u, \bar n,  \bar v, \bar \phi),$ a strong solution of \eqref{AE} satisfying (\ref{AEboundawayzero}), there exists a positive constant $C$ such that 
\begin{equation} \label{AEstabilityestimate}
\Phi(t) \leq e^{CT}\big(\Phi(0) + \varepsilon\big)  \quad  \forall  t \in [0, T [  .
\end{equation}
Thus, as $\varepsilon$ tends to zero, if $\Phi(0) \to 0,$ then
\[ \sup\limits_{t \in [0,T[}\Phi(t) \to 0 . \]
\end{theorem}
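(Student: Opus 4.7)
The plan is to apply the relative energy method, treating the strong solution $(\bar\rho, \bar u, \bar n, \bar v, \bar\phi)$ as an approximate solution of \eqref{BEP} through \eqref{approxAE}. Using the weak formulation of \eqref{BEP} tested against the strong-solution quantities, the equations \eqref{approxAE} tested against $u-\bar u$ and $v-\bar v$, and the dissipation inequality \eqref{weakdissip} with the cutoff $\theta$ from \eqref{theta}, I would derive the integrated analogue of \eqref{localformrelenergyidentity}:
\[\Phi(t) \leq \Phi(0) + \int_0^t\!\!\int_\Omega(\hat\Sigma_1 + \hat\Sigma_2 + \hat\Sigma_*)\,dx\,ds + \varepsilon\, R(t),\]
where $\varepsilon R(t)$ collects the residual arising from the source $\varepsilon\bar e$ in the approximate electron momentum equation. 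Since $\bar e = \bar n\,\partial_t\bar v + \bar n(\bar v\cdot\nabla)\bar v$ is bounded in $L^\infty$ by the regularity of the strong solution and \eqref{AEboundawayzero}, Cauchy--Schwarz combined with the $\varepsilon n|v|^2$ control coming from $\hat{\mathcal H}$ yields $R(t)\leq C\,(1+\int_0^t \Phi(s)\,ds)$; a Gronwall argument then closes everything.

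The source $\hat\Sigma_1$ is pointwise dominated by $C\,\hat{\mathcal H}$ thanks to the $L^\infty$ bounds on $\nabla\bar u$ and $\nabla\bar v$. The source $\hat\Sigma_2$ is also bounded by $C\,\hat{\mathcal H}$ via the standard comparison $P_i(r|\bar r)\leq C_i H_i(r|\bar r)$, which follows from the pressure growth condition $|rP_i''(r)|\leq \hat k_i P_i'(r)$ together with $\bar\rho,\bar n \in [A_i,B_i]$, argued through a near/far dichotomy about $\bar r$.

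The main obstacle is the electric source $\hat\Sigma_*$. Here I would exploit the Poisson equation $-\Delta(\phi-\bar\phi) = (\rho-n) - (\bar\rho-\bar n)$ together with the algebraic identity
\[(\rho-\bar\rho)\bar u - (n-\bar n)\bar v = -\Delta(\phi-\bar\phi)\,\bar u + (n-\bar n)(\bar u-\bar v).\]
Inserting the first term into $\hat\Sigma_*$ and integrating by parts (boundary terms vanish because $\bar u\cdot\nu = 0$ and $\partial_\nu(\phi-\bar\phi) = 0$) produces a quadratic form in $\nabla(\phi-\bar\phi)$ bounded by $C\Phi$. The second term is the delicate one: first, $\bar u - \bar v = (\Delta H_2'(\bar n)/\bar n)\bar u + \nabla\partial_t H_2'(\bar n)/\bar n$ is uniformly bounded by the regularity of $(\bar u,\bar n)$ and \eqref{AEboundawayzero}, so H\"older reduces the remaining integral to a product of the form $\|n-\bar n\|_{L^p}\,\|\nabla(\phi-\bar\phi)\|_{L^{p'}}$ for a suitable pair. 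Splitting $n-\bar n$ into near and far parts with respect to $\bar n$ and using $H_2(n|\bar n)\geq c\,\min(|n-\bar n|^2,|n-\bar n|^{\gamma_2})$ yields $\|n-\bar n\|_{L^p}\leq C(\Phi^{1/2}+\Phi^{1/\gamma_2})$, while the companion factor is controlled by elliptic $L^p$ regularity for the Neumann Poisson problem combined with the Sobolev embedding $H^1\hookrightarrow L^{2d/(d-2)}$ (for $d\geq 3$, with a $W^{1,p}$ substitute in $d=2$). The exponents balance precisely when $\gamma \geq 2 - \frac{1}{d}$, this being the origin of the hypothesis. Assembling these bounds gives $\Phi(t) \leq \Phi(0) + C_1\varepsilon + C_2\int_0^t \Phi(s)\,ds$, and Gronwall's lemma produces \eqref{AEstabilityestimate}.
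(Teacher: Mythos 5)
Your overall architecture is the same as the paper's: the strong solution is promoted to an approximate solution of \eqref{BEP} via \eqref{approxAE} with residual $\varepsilon\bar e$, a relative energy inequality $\Phi(t)-\Phi(0)\leq\sum_i\Sigma_i$ is derived (Proposition \ref{AErelenergyinequalityprop}), $\hat\Sigma_1$ and $\hat\Sigma_2$ are absorbed into $C\hat{\mathcal H}$ exactly as you describe, the $\varepsilon\bar e$ term is handled by Cauchy--Schwarz against the $\varepsilon n|v-\bar v|^2$ part of the relative energy, and Gronwall closes the estimate. Where you genuinely diverge is the electric term $\hat\Sigma_*$ of \eqref{RHSterms}. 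The paper estimates $\int_\Omega\big((\rho-\bar\rho)\bar u-(n-\bar n)\bar v\big)\cdot\nabla(\phi-\bar\phi)\,dx$ in one stroke: H\"older with $q=\tfrac{2}{3-\gamma}$, the embedding $W^{1,q}(\Omega)\to L^{q'}(\Omega)$, and elliptic $W^{2,q}$ regularity for the Neumann problem give a bound by $\||\rho-\bar\rho|+|n-\bar n|\|_{L^q}^2$, which the near/far splitting of $H_i(\cdot|\cdot)$ converts into $C\int H_1(\rho|\bar\rho)+H_2(n|\bar n)\,dx$; the constraint $\gamma\geq 2-\tfrac1d$ is exactly what makes $q'\leq q^*$. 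You instead split $(\rho-\bar\rho)\bar u-(n-\bar n)\bar v=-\Delta(\phi-\bar\phi)\bar u+(n-\bar n)(\bar u-\bar v)$, which is algebraically correct (with $\delta=1$), dispose of the first piece by integration by parts into a quadratic form in $\nabla(\phi-\bar\phi)$ already controlled by $\Phi$, and only need the H\"older/elliptic/Sobolev machinery for the second piece, whose coefficient $\bar u-\bar v=\big(\Delta H_2'(\bar n)\bar u+\nabla\partial_tH_2'(\bar n)\big)/\bar n$ is indeed uniformly bounded under \eqref{AEboundawayzero} and the assumed regularity. Your route localises the hard estimate onto the single density difference $n-\bar n$ at the price of an extra integration by parts that, for a weak solution, must itself be justified by approximation (the paper performs an analogous approximation argument when proving \eqref{intpartselliptic}); the paper's route is shorter but treats both densities symmetrically. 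One point to fix: for the companion factor you invoke $H^1\hookrightarrow L^{2d/(d-2)}$, which presupposes $\Delta(\phi-\bar\phi)\in L^2$ and hence $\gamma\geq2$; in the regime $2-\tfrac1d\leq\gamma<2$ covered by the theorem you must, as in \eqref{ZEMlemma}, work with $W^{2,q}$ regularity for $q=\tfrac{2}{3-\gamma}<2$ and the embedding $W^{1,q}\hookrightarrow L^{q'}$, which is where the threshold $\gamma\geq2-\tfrac1d$ actually enters. With that correction your argument goes through.
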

\subsection{Relative energy inequality}
The first step in order to establish Theorem \ref{AEmaintheorem} is an inequality satisfied by $\Phi.$ This inequality, called the relative energy inequality, is obtained considering the solutions detailed in Sections \ref{weaksolutions} and \ref{zem}.
\begin{proposition} \label{AErelenergyinequalityprop}
Given $(\rho, u,n,v, \phi)$, a weak solution of (\ref{BEP}) satisfying (\ref{weakdissip}), and given $(\bar{\rho}, \bar{u}, \bar n,  \bar v, \bar \phi),$ a strong solution of (\ref{AE}), the function $\Phi$ satisfies:
\begin{equation} \label{AErelenergyinequality}
\Phi(t)-\Phi(0) \leq \Sigma_1(t) + \Sigma_2(t) + \Sigma_3(t) + \Sigma_*(t)  \quad \ \forall\  t \in [0, T [   ,
\end{equation}
where \[\Sigma_i(t) = \int_0^t \int_\Omega \hat{\Sigma}_i \, dxds\] with $\hat{\Sigma}_i$ as in (\ref{RHSterms}) for $ i = 1,2,*$, and
\[\Sigma_3(t)  = - \int_0^t \int_\Omega  \varepsilon\frac{n}{\bar{n}} \bar{e} \cdot (v - \bar{v}) \, dx ds  . \] 
\end{proposition}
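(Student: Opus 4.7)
The approach is the standard relative-energy bookkeeping: combine the dissipation inequality for the weak solution of \eqref{BEP} with the weak conservation laws tested against quantities built from the strong solution of \eqref{AE}, and use the approximate system \eqref{approxAE} to replace time derivatives of the strong solution by spatial expressions. The starting point is the Lipschitz cutoff $\theta$ defined by \eqref{theta}. Expanding $\hat{\mathcal{H}}$ in \eqref{relativeenergy} gives
\[
\hat{\mathcal{H}} = \mathcal{H}_0(\rho,u,n,v,\phi) + \bigl(\tfrac{1}{2}\rho|\bar u|^2 + \tfrac{\varepsilon}{2} n|\bar v|^2 + \tfrac{\delta}{2}|\nabla\bar\phi|^2 - H_1(\bar\rho) - H_2(\bar n) - H_1'(\bar\rho)(\rho-\bar\rho) - H_2'(\bar n)(n-\bar n)\bigr) - \bigl(\rho u\cdot\bar u + \varepsilon nv\cdot\bar v + \delta\nabla\phi\cdot\nabla\bar\phi\bigr),
\]
so that $\int_\Omega \hat{\mathcal{H}}\dot\theta\,dx\,ds$ decomposes into (a) a pure BEP-energy piece handled by \eqref{weakdissip}, and (b) linear pieces in $(\rho,\rho u,n,nv,\nabla\phi)$ multiplied by functionals of the strong solution, plus quantities depending only on the strong solution.

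The linear pieces in (b) are generated exactly by the weak formulations of \eqref{BEP} tested against
\[
\varphi = \bigl(-\tfrac{1}{2}|\bar u|^2 + H_1'(\bar\rho) + \bar\phi\bigr)\theta,\qquad \tilde\varphi = \bar u\,\theta,\qquad \psi = \bigl(-\tfrac{\varepsilon}{2}|\bar v|^2 + H_2'(\bar n) - \bar\phi\bigr)\theta,\qquad \tilde\psi = \bar v\,\theta,
\]
which are Lipschitz by hypothesis (and, up to the $\dot\theta$ factor, match exactly the test functions flagged in the definition of a strong solution). Adding the four weak identities and recombining the $\dot\theta$ contributions with the dissipation bound produces, after $\tau\downarrow 0$, the inequality $\Phi(t)-\Phi(0)\le R(t)$, where $R(t)$ collects all remaining terms (those with $\theta$, not $\dot\theta$). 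The electric cross term is treated using $-\Delta\bar\phi = \bar\rho-\bar n$ and $-\delta\Delta\phi=\rho-n$, giving the pointwise identity $\int(\bar\phi - \phi)(\rho - n - \bar\rho + \bar n)\,dx$ integrations that convert $\nabla\phi\cdot\nabla\bar\phi$ couplings into $\bar\phi(\rho-n) + \phi(\bar\rho-\bar n)$ couplings; this is the only place where $\delta$ plays a nontrivial role and it ultimately contributes to $\hat\Sigma_*$.

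The remainder $R(t)$ is then reduced to $\Sigma_1+\Sigma_2+\Sigma_3+\Sigma_*$ by invoking the approximate system \eqref{approxAE}. Specifically, the time derivatives $\partial_t\bar u$, $\partial_tH_1'(\bar\rho)$ and $\partial_tH_2'(\bar n)=\partial_t\bar\phi$ that appear through $\partial_t\varphi$ and $\partial_t\tilde\varphi$ are eliminated by the first three equations in \eqref{approxAE}, producing the convective and pressure structure that assembles into $\hat\Sigma_1$ and $\hat\Sigma_2$ (one uses $rH_i''(r)=P_i'(r)$ to rewrite $\rho(H_i'(\rho)-H_i'(\bar\rho)) - (P_i(\rho)-P_i(\bar\rho))$ as $P_i(\rho|\bar\rho)$). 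The analogous substitution for $\partial_t\bar v$ is where the error enters: the fourth equation of \eqref{approxAE} gives $\varepsilon\bar n(\partial_t\bar v + \bar v\cdot\nabla\bar v) = -\nabla P_2(\bar n) + \bar n\nabla\bar\phi + \varepsilon\bar e$, and the term $\varepsilon\bar e$ survives after the cancellations, producing, after pairing with the factor $n/\bar n$ that comes from the relative kinetic energy of the electrons, precisely $\Sigma_3 = -\int_0^t\!\!\int_\Omega \varepsilon\tfrac{n}{\bar n}\bar e\cdot(v-\bar v)\,dx\,ds$.

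\textbf{Main obstacle.} The delicate part is the algebraic reorganization of the remainder, in two respects. First, one must show that once the strong-solution time derivatives are substituted via \eqref{approxAE}, the $\theta$-terms coming from $\partial_t\varphi$, $\partial_t\psi$, $\partial_t\tilde\varphi$, $\partial_t\tilde\psi$ combine with the convective, pressure, and Lorentz flux contributions to produce exactly the relative-energy structure $P_i(\rho|\bar\rho)$, $\rho\nabla\bar u:(u-\bar u)^{\otimes 2}$ and $((\rho-\bar\rho)\bar u - (n-\bar n)\bar v)\cdot\nabla(\phi-\bar\phi)$. Second, the electron momentum equation produces the error $\bar e$ multiplied by $v$, not $v-\bar v$; extracting $(v-\bar v)$ and the weight $n/\bar n$ requires adding and subtracting $\varepsilon\bar e\cdot\bar v$ and using the continuity equation of the strong solution. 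Once these cancellations are verified the inequality \eqref{AErelenergyinequality} follows by letting $\tau\downarrow 0$.
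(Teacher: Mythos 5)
Your overall strategy is the right one and coincides with the paper's: the paper itself delegates the relative-energy bookkeeping (testing the weak formulation against $\varphi,\tilde\varphi,\psi,\tilde\psi$ built from the strong solution, substituting the time derivatives via \eqref{approxAE}, and assembling $\hat\Sigma_1,\hat\Sigma_2,\Sigma_3$) to the analogous computation in \cite{alves2022relaxation}, and your test functions, the origin of the weight $n/\bar n$ in $\Sigma_3$, and the use of $rH_i''=P_i'$ to produce $P_i(\rho|\bar\rho)$ are all consistent with that computation. (Minor point: in this section the paper fixes $\delta=1$, so the Poisson equation is $-\Delta\phi=\rho-n$.)

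There is, however, a genuine gap at exactly the place where the paper concentrates its effort. You treat the reorganization of the electric coupling as a formal pointwise/integration-by-parts identity, but after the standard computation one is left with the term
\[
\int_\Omega -(\partial_s\bar\phi)(\rho-n-\bar\rho+\bar n)+(\rho\bar u-n\bar v)\cdot\nabla(\phi-\bar\phi)\,dx,
\]
and to identify this with $\int_\Omega\hat\Sigma_*\,dx$ one must prove, with $f=\rho-n$ and $\bar f=\bar\rho-\bar n$, the identity
\[
\int_\Omega -(f-\bar f)\,\partial_t\bar\phi\,dx=\int_\Omega -(\phi-\bar\phi)\,\partial_t\bar f\,dx,
\]
which is then combined with the continuity equations of the strong solution and the boundary conditions $\bar u\cdot\nu=\bar n\bar v\cdot\nu=0$ to cancel the $(\bar\rho\bar u-\bar n\bar v)\cdot\nabla(\phi-\bar\phi)$ part. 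This identity is not automatic: $f$ is only an $L^\gamma(\Omega)$ function and $\phi$ is defined through the Neumann problem, so the double integration by parts hidden in ``$\int\Delta(\phi-\bar\phi)\partial_t\bar\phi=\int-(\phi-\bar\phi)\partial_t\bar f$'' must be justified. The paper does this by taking $(f_k)\subseteq C_c^\infty(\Omega)$ with $f_k\to f$ in $L^\gamma$, solving $-\Delta\phi_k=f_k$, performing the integration by parts at the smooth level, and passing to the limit using $\|\phi_k-\phi\|_{W^{2,\gamma}(\Omega)}\leq C\|f_k-f\|_{L^\gamma(\Omega)}$ from elliptic regularity together with the boundedness of $\partial_t\bar\phi$ and $\partial_t\bar f$. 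Your proposal never addresses this approximation/regularity step, and without it the passage from the raw remainder to $\Sigma_*$ is unjustified at the regularity level of the weak solutions considered here. Adding this limiting argument would close the proof.
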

\begin{proof}
This derivation is very similar to the proof of \cite[Proposition 4.4]{alves2022relaxation}, so most of the details are omitted. Proceeding as in \cite{alves2022relaxation}, one obtains that 
\begin{equation*}
 \begin{split}
   \Phi(t)-\Phi(0) \leq  & \ \Sigma_1(t) + \Sigma_2(t) + \Sigma_3(t) \\
   & + \int_0^t \int_\Omega - (\partial_s \bar{\phi})(\rho - n - \bar{\rho} + \bar{n}) +  (\rho \bar{u} - n \bar{v}) \cdot \nabla(\phi - \bar{\phi}) \,  dx ds  .
 \end{split}
\end{equation*}
Now, set $f = \rho  - n$ and $\bar f = \bar \rho - \bar n.$ One wishes to prove that 
\begin{equation} \label{intpartselliptic}
\int_\Omega - (f - \bar f) (\partial_t \bar \phi) \, dx = \int_\Omega - (\phi - \bar \phi) (\partial_t \bar f) \, dx .
\end{equation}
Let $(f_k) \subseteq C_c^\infty(\Omega)$ be such that $f_k \to f$ in $L^\gamma(\Omega)$ as $k \to \infty,$ and let $\phi_k$ be such that $-\Delta \phi_k = f_k.$ Then,
\begin{equation*}
\int_\Omega - (f_k - \bar f) (\partial_t \bar \phi) \, dx = \int_\Omega \Delta(\phi_k - \bar \phi) (\partial_t \bar \phi) \, dx = \int_\Omega - (\phi_k - \bar \phi) (\partial_t \bar f) \, dx  .
\end{equation*}
Letting $k \to \infty$ above yields the desired identity. Indeed,
\begin{equation*}
\Big| \int_\Omega (f_k - \bar f) (\partial_t \bar \phi) \, dx - \int_\Omega (f - \bar f) (\partial_t \bar \phi) \, dx\Big| \leq  \int_\Omega |(f_k - f)(\partial_t \bar \phi)| \, dx  \leq C \|f_k - f \|_{L^\gamma(\Omega)} \to 0  ,
\end{equation*}
and by standard elliptic theory one derives
\begin{equation*}
\begin{split}
\Big| \int_\Omega (\phi_k - \bar \phi) (\partial_t \bar f) \, dx - \int_\Omega (\phi - \bar \phi) (\partial_t \bar f) \, dx\Big|  & \leq C \|\phi_k - \phi \|_{L^\gamma(\Omega)}  \\
& \leq C \|\phi_k - \phi \|_{W^{2,\gamma}(\Omega)} \leq C \|f_k - f \|_{L^\gamma(\Omega)} \to 0 ,
\end{split}
\end{equation*}
Therefore, (\ref{intpartselliptic}) holds and, together with the continuity equations satisfied by the strong solution, implies that 
\[\int_\Omega - (\partial_s \bar{\phi})(\rho -n- \bar{\rho}  + \bar{n}) + (\rho \bar{u} - n \bar{v}) \cdot \nabla(\phi - \bar \phi) \, dx = \int_\Omega \hat{\Sigma}_* \, dx, \]
which concludes the proof.
\end{proof}

\subsection{Bounds for the relative energy} \label{sectionAEbounds}
With the same assumptions as those in Theorem \ref{AEmaintheorem}, it holds that
\begin{equation} \label{AEbounds}
\begin{split}
\Sigma_i(t) & \leq C \int_0^t \Phi(s) \, ds  , \quad i = 1,2,*  , \\
\Sigma_3(t) & \leq C\varepsilon t + C \int_0^t \Phi(s) \, ds  . \\
\end{split}
\end{equation}
Indeed, the first three terms are treated as in \cite[Lemma 4.5, Lemma 4.6, Lemma 4.8]{alves2022relaxation}, respectively. For the remaining term, $\Sigma_*,$ we state a lemma which serves as the counterpart of \cite[Lemma 4.7]{alves2022relaxation} for the present case.
\begin{lemma}
Under the conditions of Theorem \ref{AEmaintheorem}, there exists a positive constant $C$ such that
\begin{enumerate}[(i)]
\item if $\gamma \geq 2$: 
\begin{equation} \
\begin{split}
\int_\Omega \big((\rho - \bar \rho)\bar u - (n - \bar n)\bar v \big) \cdot \nabla(\phi - \bar \phi) \, dx  & \leq C \int_\Omega |\rho - \bar \rho|^2 + |n - \bar n|^2 + |\nabla (\phi - \bar \phi)|^2 \, dx \\
& \leq C \int_\Omega H_1(\rho | \bar \rho) + H_2(n | \bar n) + |\nabla (\phi - \bar \phi)|^2 \, dx.
\end{split}
\end{equation}
\item if $2 - \frac{1}{d} \leq \gamma < 2$:
\begin{equation} \label{ZEMlemma}
\begin{split}
\int_\Omega \big((\rho - \bar \rho)\bar u - (n - \bar n)\bar v \big) \cdot \nabla(\phi - \bar \phi) \, dx  & \leq C \Big(\int_\Omega (|\rho - \bar{\rho}|+|n - \bar{n}|)^q \, dx \Big)^{\frac{2}{q}} \\
& \leq C \int_\Omega H_1(\rho | \bar \rho) + H_2(n | \bar n) \, dx,
\end{split}
\end{equation}
where $q = \frac{2}{3 - \gamma}$.
\end{enumerate}
\end{lemma}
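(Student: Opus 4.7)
The plan is to reduce both inequalities to controlling $\nabla(\phi - \bar\phi)$ via the elliptic relation $-\Delta(\phi - \bar\phi) = (\rho - n) - (\bar\rho - \bar n)$, inherited from (\ref{BEP}) and (\ref{approxAE}) with $\delta = 1$, after pulling out the $L^\infty$ velocity factors. The strong-solution assumptions together with $\bar v = (\bar\rho \bar u - \nabla\partial_t H_2^\prime(\bar n))/\bar n$ and $\bar n \geq A_2 > 0$ guarantee that $\bar u, \bar v \in L^\infty([0,T[ \times \Omega)$, so any constant absorbing them is harmless.

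For case (i), $\gamma \geq 2$, I would first apply Cauchy-Schwarz and Young's inequality, yielding the first inequality in terms of $L^2$ norms of $\rho - \bar\rho$, $n - \bar n$, and $\nabla(\phi - \bar\phi)$. The second inequality then follows from the pointwise estimate $|r - \bar r|^2 \leq C\, H_i(r | \bar r)$ for $\bar r$ in a compact set of $(0, \infty)$, which is ensured by (\ref{AEboundawayzero}). This estimate is standard: in the essential regime $\{|r - \bar r| \leq 1\}$ it follows from Taylor's theorem applied to $H_i$, while in the residual regime the assumption $\gamma_i \geq 2$ lets the $r^{\gamma_i}$-growth of $H_i$ dominate the quadratic.

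For case (ii), $2 - 1/d \leq \gamma < 2$, I would apply Hölder's inequality with $q = 2/(3-\gamma)$ and its conjugate $q^\prime = q/(q-1)$ to obtain
\[
\int_\Omega \big( (\rho - \bar\rho)\bar u - (n - \bar n)\bar v \big) \cdot \nabla(\phi - \bar\phi) \, dx \leq C \big\| |\rho - \bar\rho| + |n - \bar n| \big\|_{L^q(\Omega)} \big\| \nabla(\phi - \bar\phi) \big\|_{L^{q^\prime}(\Omega)}.
\]
The decisive step is bounding the second factor by the first via the $W^{2,q}$ Neumann elliptic estimate $\|\phi - \bar\phi\|_{W^{2,q}} \leq C\|(\rho - n) - (\bar\rho - \bar n)\|_{L^q}$ (the zero-mean compatibility condition for the right-hand side holds by mass conservation), followed by the Sobolev embedding $W^{2,q}(\Omega) \hookrightarrow W^{1,q^\prime}(\Omega)$. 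The embedding requires $q \geq 2d/(d+1)$, which a direct calculation shows is equivalent to $\gamma \geq 2 - 1/d$; this is precisely the borderline hypothesis and explains the choice of $q$.

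To pass from $\|\rho - \bar\rho\|_{L^q}^2$ (and the analogous $n$-term) to the relative internal energies, I would use the algebraic fact $(\gamma - 1)(\gamma - 2) \leq 0$ for $\gamma \in [1,2]$, which gives $q \leq \gamma$. Splitting $\Omega$ into an essential set $\{|\rho - \bar\rho| \leq 1\}$ and its complement, I would bound the essential part by Hölder against $|\rho - \bar\rho|^2 \leq C\, H_1(\rho | \bar\rho)$, and the residual part via $|\rho - \bar\rho|^q \leq |\rho - \bar\rho|^\gamma \leq C\, H_1(\rho | \bar\rho)$ (the last bound following from the $r^{\gamma_1}$-growth of $H_1(r | \bar r)$ for large $r$ with bounded $\bar r$). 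The main obstacle is aligning the elliptic-Sobolev chain at the critical exponent; once $q$ is fixed by $\gamma$ in this way, the remaining calculation is a routine essential/residual bookkeeping, and the analogous argument applies verbatim to $n - \bar n$.
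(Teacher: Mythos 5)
Your proposal is correct and follows essentially the same route as the paper: Hölder with the exponent pair $(q,q')$, $q=\tfrac{2}{3-\gamma}$, elliptic $W^{2,q}$ regularity for the Neumann problem, and the Sobolev embedding (the paper states it as $W^{1,q}(\Omega)\hookrightarrow L^{q'}(\Omega)$ applied to $\nabla(\phi-\bar\phi)$, which is your $W^{2,q}\hookrightarrow W^{1,q'}$ step), with the same observation that the borderline condition $q\geq \tfrac{2d}{d+1}$ is exactly $\gamma\geq 2-\tfrac1d$. The passage from $L^q$ and $L^2$ norms to the relative internal energies via the essential/residual splitting is likewise the argument the paper delegates to the cited lemma, so there is nothing genuinely different to report.
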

It is clear that the condition in (\ref{AEbounds}) for $\Sigma_\ast$ follows from this lemma. The proof of the lemma is identical to the proof of \cite[Lemma 4.7]{alves2022relaxation}, except for the first inequality of (\ref{ZEMlemma}), which we prove here. Assume that $2 - \frac{1}{d} \leq \gamma < 2$ and let $q= \frac{2}{3-\gamma},$  $q'= \frac{q}{q-1},$ and $q^*=\frac{dq}{d-q}.$ Since $2 - \tfrac{1}{d} \leq \gamma < 2,$ then $1<q < \gamma < 2$, $q <q^\prime \leq q^*$, and the following Sobolev imbedding holds \cite{adams}:
\begin{equation*} 
    W^{1,q}(\Omega) \to L^{q^\prime}(\Omega) .  
\end{equation*}
Combining the above imbedding with H\"{o}lder's inequality yields that
\begin{equation*}
\int_\Omega \big((\rho - \bar \rho)\bar u - (n - \bar n)\bar v \big) \cdot \nabla(\phi - \bar \phi) \, dx \leq C \| |\rho - \bar{\rho}|+|n - \bar{n}| \|_{L^q(\Omega)} \|\nabla(\phi- \bar{\phi}) \|_{W^{1,q}(\Omega)}    .
\end{equation*}
Now, using elliptic $L^q$ regularity theory, one deduces that \[\|\nabla(\phi- \bar{\phi}) \|_{W^{1,q}(\Omega)} \leq \|\phi- \bar{\phi}\|_{W^{2,q}(\Omega)} \leq C \ \|\rho - \bar{\rho}-n+\bar{n} \|_{L^q(\Omega)}  , \]                                                                                                                                             
whence 
\begin{equation*}
  \int_\Omega \big((\rho - \bar \rho)\bar u - (n - \bar n)\bar v \big) \cdot \nabla(\phi - \bar \phi) \, dx \leq C  \| |\rho - \bar{\rho}|+|n - \bar{n}| \|_{L^q(\Omega)}^2 ,
\end{equation*}
as desired. \par
Combining (\ref{AErelenergyinequality}) with (\ref{AEbounds}) results in 
\[ 
\Phi(t) - \Phi(0) \leq C \int_0^t \Phi(s) \, ds + C\varepsilon t  \quad \forall t \in [0,T[ ,
\]
from which (\ref{AEstabilityestimate}) follows by Gronwall's inequality.
\begin{remark}
In \cite{alves2022relaxation} one uses the Neumann function to represent the solution of the Poisson equation, which is not the case in this section. There, the counterpart of the first inequality in (\ref{ZEMlemma}) is obtained combining the properties of the Neumann function with a classical estimate on Riesz potentials. Those ideas will also be used in the last part of the present work; see Section \ref{section_intbyparts}.
\end{remark}
\section{Joint zero-electron-mass and quasi-neutral limits} \label{zemqn} 
In this section, one analyses the limit of (\ref{BEP}) towards (\ref{E}) as $\varepsilon, \delta \to 0.$
First, one describes the notion of strong solution $(\bar \rho, \bar u)$ for the Euler system (\ref{E}). A bounded Lipschitz pair $(\bar \rho, \bar u)$ with $\bar \rho > 0$ is a strong solution of $(\ref{E})$ if:
\begin{enumerate}[(i)]
\item $(\bar \rho, \bar u)$ satisfies 
\begin{equation*} \label{strongE}
\begin{split}
\partial_t \bar \rho + \nabla \cdot (\bar \rho \bar u) & = 0, \\
\partial_t \bar u + \bar u \cdot \nabla \bar u & = - \nabla\big(H_1^\prime(\bar \rho)+H_2^\prime(\bar \rho) \big),
\end{split}
\end{equation*}
for almost every $(t,x) \in \mathopen{]}0,T\mathclose{[} \times \Omega$ and \[\bar u \cdot \nu = 0 \ \text{on} \  \mathopen{[}0,T\mathclose{[} \times \partial \Omega  , \]
\item for each $t \in [0,T[$ and each $\kappa > 0$ such that $t + \kappa < T,$ the functions $\varphi, \tilde{\varphi}$ given by \[\varphi = \big(-\tfrac{1}{2}|\bar u|^2 + H_1^\prime(\bar \rho) +  H_2^\prime(\bar \rho) \big)\theta  , \quad \tilde{\varphi} =  \bar u \theta \]
are Lipschitz continuous, where $\theta$ is given by (\ref{theta}).
\end{enumerate}
Regarding the initial conditions, a strong solution $(\bar \rho, \bar u)$ is assumed to originate from initial data $(\bar \rho_0, \bar u_0)$ satisfying
\begin{equation*} \label{stronginitialbounds1}
\int_\Omega \bar \rho_0 \, dx = \bar M < \infty   ,
\end{equation*}
\begin{equation} \label{stronginitialbounds2}
 \int_\Omega \mathcal{H}_E(\bar \rho_0, \bar u_0) \, dx < \infty  ,
\end{equation}
where $\mathcal{H}_E$ is given by (\ref{energyE}).
Moreover, for $i,j = 1, \ldots, d,$ the derivatives 
\[ \partial_{x_i} \bar \rho  , \quad \partial_{x_i} \bar u  , \quad \partial^2_{x_i x_j} \bar \rho  , \quad \partial^2_{x_i x_j} \bar u   \]
are assumed to be bounded in $\mathopen{[}0,T\mathclose{[} \times \Omega.$ \par 
A strong solution of (\ref{E}) satisfies 
\begin{equation*} \label{strongenergypreweakform}
\frac{d}{dt} \int_\Omega \mathcal{H}_E(\bar \rho, \bar u) \, dx = 0  ,
\end{equation*}
which implies that (\ref{stronginitialbounds2}) holds for any $t \in \mathopen{[}0,T\mathclose{[}.$
Furthermore, one considers strong solutions that are bounded away from vacuum, {\it i.e.}, 
\begin{equation} \label{boundawayzero}
A \leq \bar \rho \leq B   ,
\end{equation}
for some $A > 0$ and $B < \infty.$
\par 
Using the relative energy method, one compares a weak solution $(\rho,  u, n, v, \phi)$ of (\ref{BEP}) satisfying (\ref{weakdissip}) with a strong solution $(\bar \rho, \bar u)$ of (\ref{E}) satisfying (\ref{boundawayzero}). In order to do so, one regards the strong solution as an approximate solution of (\ref{BEP}) by placing 
\begin{equation*} \label{approxquantit}
\begin{split}
\bar n &= \bar \rho ,\\ 
\bar v &= \bar u ,\\
 \bar \phi &=  H_2^\prime(\bar \rho) .
\end{split}
\end{equation*}
Thus, the strong solution, henceforth denoted by $(\bar \rho, \bar u, \bar n, \bar v, \bar \phi ),$ satisfies the following approximate system:
\begin{equation} \label{approxE}
 \begin{split}
  \partial_t \bar \rho + \nabla \cdot (\bar \rho \bar u) & = 0, \\
  \bar \rho \partial_t  \bar u +\bar \rho  (\bar u \cdot \nabla) \bar u & = - \nabla P_1(\bar \rho) - \rho \nabla \bar \phi, \\
  \partial_t \bar n + \nabla \cdot (\bar n \bar v) & = 0, \\
  \varepsilon \big(\bar n \partial_t  \bar v +\bar n  (\bar v \cdot \nabla) \bar v)\big)  & = - \nabla P_2(\bar n) +  \bar n \nabla \bar \phi + \varepsilon \bar e,\\ 
  - \delta \Delta \bar \phi & = \bar \rho - \bar n + \delta \bar e_0,
 \end{split}
\end{equation} 
where the errors $\bar e,$ $\bar e_0$ are given by
\[\bar e = \bar n \partial_t \bar v + \bar n (\bar v \cdot \nabla) \bar v    , \quad \bar e_0 = - \Delta \bar \phi .\]
\par 
Once again, one utilizes the relative energy function $\Phi$ given by (\ref{Phi}) to compare the two considered solutions of the different systems. In this case, one compares a weak solution $(\rho,  u, n, v, \phi)$ of (\ref{BEP}) and a strong solution $(\bar \rho,  \bar u, \bar n, \bar v, \bar \phi )$ of (\ref{E}). Moreover, here one considers the representation formula using the Neumann function $N$ for the solution of $-\delta \Delta \phi = \rho - n$:
\[\phi= N * \Big( \frac{\rho - n}{\delta} \Big)   , \]
see \cite{kenig1994harmonic}. The choice of this representation formula together with the theory of Riesz potentials yield two integration by parts formulas that are crucial to prove the main result of this section. The result, a stability estimate satisfied by $\Phi$, is as follows:
\begin{theorem} \label{maintheorem}
Assume that $d \geq 3$. Given $(\rho, u,n, v),$ $\phi = N*(\rho - n)/\delta$,  a weak solution of (\ref{BEP}) satisfying (\ref{weakdissip}) and with $\gamma \geq \frac{2d}{d+1},$ and given $(\bar{\rho},  \bar u, \bar n,  \bar v, \bar \phi),$ a strong solution of \eqref{E} satisfying (\ref{boundawayzero}), there exists a positive constant $C$ such that 
\begin{equation} \label{stabilityestimate}
\Phi(t) \leq e^{CT}\big(\Phi(0) + \varepsilon + \delta \big) \quad \forall t \in [0, T [  .
\end{equation}
Thus, if $\Phi(0) \to 0$ as $(\varepsilon, \delta) \to (0,0),$ then
\[ \sup\limits_{t \in [0,T[}\Phi(t) \to 0 \  \text{as} \  (\varepsilon, \delta) \to (0,0)  . \]
\end{theorem}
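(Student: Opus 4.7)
The plan is to mirror the scheme of Theorem \ref{AEmaintheorem}: first derive a relative energy inequality that accounts for the two errors $\varepsilon\bar e$ and $\delta\bar e_0$ entering (\ref{approxE}); then bound each term either by $C\int_0^t \Phi(s)\,ds$ or by $C(\varepsilon+\delta)t$; and conclude with Gronwall. The novelty compared with Section \ref{zem} is the handling of the electric interaction, where the natural elliptic regularity for $\phi-\bar\phi$ degenerates as $\delta\to 0$; this will be resolved using the Neumann-function representation $\phi=N\ast(\rho-n)/\delta$ together with classical mapping properties of Riesz potentials.

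I would proceed as in Proposition \ref{AErelenergyinequalityprop}, testing the weak formulation of (\ref{BEP}) against bounded Lipschitz quantities built from $(\bar\rho,\bar u,\bar n,\bar v,\bar\phi)$ and exploiting the approximate system (\ref{approxE}), to arrive at
\[
\Phi(t)-\Phi(0) \leq \Sigma_1(t)+\Sigma_2(t)+\Sigma_3(t)+\Sigma_*(t)+\Sigma_0(t),
\]
where $\Sigma_1,\Sigma_2,\Sigma_3$ are the analogues of the terms from Section \ref{zem}, $\Sigma_*$ collects the relative electric interaction $\int_0^t\!\int_\Omega \hat\Sigma_*\,dx\,ds$, and $\Sigma_0$ is a new contribution carrying the $\delta\bar e_0$ error produced by the approximate Poisson equation. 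Using $\bar n=\bar\rho$ and $\bar v=\bar u$, the integrand $\hat\Sigma_*$ simplifies to $(\rho-n)\,\bar u\cdot\nabla(\phi-\bar\phi)$.

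Next, I would estimate each piece. The terms $\Sigma_1$ and $\Sigma_2$ are bounded by $C\int_0^t\Phi\,ds$ as in \cite[Lemma 4.5, Lemma 4.6]{alves2022relaxation}, using (\ref{boundawayzero}) and the assumptions on the pressures, and $\Sigma_3$ is controlled by $C\varepsilon t+C\int_0^t\Phi\,ds$ exactly as in Section \ref{sectionAEbounds}. The heart of the proof is the treatment of $\Sigma_*$ and $\Sigma_0$. Substituting $\phi=N\ast(\rho-n)/\delta$ makes $\nabla\phi$ a Riesz-type potential of $(\rho-n)/\delta$; combining this representation with the integration-by-parts identities available for the Neumann function and the $L^q\!\to\!L^{q'}$ mapping property of the Riesz potential of order $1$, I would rewrite $\Sigma_*+\Sigma_0$ as a sum of quantities controlled either by $C\delta t$ (terms whose smallness comes from smoothness and boundedness of $(\bar\rho,\bar u)$) or by $C\int_0^t\Phi(s)\,ds$ (quadratic forms in the relative densities, absorbed by $H_1(\rho|\bar\rho)+H_2(n|\bar n)$). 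The threshold $\gamma\geq 2d/(d+1)$ in dimension $d\geq 3$ is exactly the regime where $\rho-n$ lies in a Lebesgue space compatible with the range of the Riesz potential, ensuring that these quadratic forms are indeed bounded by the relative internal energies. Collecting all bounds will yield $\Phi(t)\leq \Phi(0)+C(\varepsilon+\delta)t+C\int_0^t\Phi(s)\,ds$, and Gronwall's inequality then provides (\ref{stabilityestimate}).

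The main obstacle will be the analysis of $\Sigma_*$: the relative electric-energy contribution $\tfrac{\delta}{2}|\nabla(\phi-\bar\phi)|^2$ to $\Phi$ gives only a $\delta$-weighted $L^2$-control of $\nabla(\phi-\bar\phi)$, whereas direct elliptic regularity applied to $-\delta\Delta(\phi-\bar\phi)=(\rho-n)+\delta\Delta\bar\phi$ carries an unfavorable $\delta^{-1}$ constant and thus cannot bound $\Sigma_*$ uniformly in $\delta$. The Neumann-function representation circumvents this by keeping the factor $1/\delta$ explicit inside the convolution, where it pairs with the Riesz potential of $\rho-n$ to produce an estimate whose $\delta$-dependence is benign, while the complementary contribution of $\Delta\bar\phi$ produces only an $O(\delta)$ error via the smoothness of $\bar\phi$. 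The delicate technical points I expect to confront will be verifying that the boundary contributions vanish (relying on $\bar u\cdot\nu=0$ and the Neumann nature of $N$) and that the Sobolev exponents balance precisely at the endpoint $\gamma=2d/(d+1)$.
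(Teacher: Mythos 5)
Your overall architecture coincides with the paper's: a relative energy inequality in the spirit of Proposition \ref{relenergyinequalityprop}, the Neumann-function representation $\phi=N*(\rho-n)/\delta$, Riesz-potential estimates, and Gronwall. You also correctly isolate the central obstruction, namely that elliptic regularity applied to $-\delta\Delta(\phi-\bar\phi)$ costs a factor $\delta^{-1}$. However, your stated resolution of that obstruction does not close. You claim that the electric interaction terms reduce to ``quadratic forms in the relative densities, absorbed by $H_1(\rho\,|\,\bar\rho)+H_2(n\,|\,\bar n)$,'' with the explicit $1/\delta$ in the convolution ``pairing with the Riesz potential of $\rho-n$'' to yield a benign estimate. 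Taken literally this fails: writing $\nabla\phi=\nabla_xN*(\rho-n)/\delta$ and applying the mapping property (\ref{steinprop}) gives
\[
\int_\Omega (\rho-n)\,\nabla\phi\cdot\bar u\,dx\ \leq\ C\,\delta^{-1}\,\big\||\rho-\bar\rho|+|n-\bar n|\big\|_{L^{q}(\Omega)}^2 ,
\]
so the $\delta^{-1}$ reappears in front of exactly the quantity you propose to absorb into the relative internal energies, and the resulting bound is not uniform in $\delta$. This is the mechanism of the zero-electron-mass section (where $\delta=1$); it does not survive the joint limit.

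The device that actually closes the estimate is different in kind: the Riesz-potential bound is used only to establish the integrability $f\nabla\phi\in L^1(\Omega,\mathbb{R}^d)$ for $\gamma\geq \tfrac{2d}{d+1}$, which legitimizes the integration-by-parts identity (\ref{intpartsformula2}),
\[
\int_\Omega (\rho-n)\,\nabla\phi\cdot\bar u\,dx=\int_\Omega \delta\,\nabla\bar u:\nabla\phi\otimes\nabla\phi\,dx-\int_\Omega \delta\,(\nabla\cdot\bar u)\,\tfrac12|\nabla\phi|^2\,dx .
\]
The right-hand side is now $\delta$-weighted and, after splitting $\nabla\phi=\nabla(\phi-\bar\phi)+\nabla\bar\phi$, is absorbed into the relative \emph{electric} energy $\delta\tfrac12|\nabla(\phi-\bar\phi)|^2$ (a constituent of $\Phi$) plus an $O(\delta t)$ remainder from $\delta|\nabla\bar\phi|^2$ --- not into $H_1(\rho\,|\,\bar\rho)+H_2(n\,|\,\bar n)$. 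The companion term $-\int(\partial_s\bar\phi+\nabla\bar\phi\cdot\bar u)(\rho-n)\,dx$ is treated the same way via (\ref{intpartsformula1}) and Young's inequality. Accordingly, the threshold $\gamma\geq\tfrac{2d}{d+1}$ is the integrability condition validating (\ref{intpartsformula2}), not a Sobolev-exponent balance for an absorption into the relative internal energies. Until your absorption step is replaced by this identity (or an equivalent $\delta$-uniform device), the bound $\Sigma_*(t)\leq C\delta t+C\int_0^t\Phi(s)\,ds$ is not established and Gronwall cannot be applied.
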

The remainder of the manuscript is devoted to giving a proof of the previous result.
\subsection{Integration by parts formulas} \label{section_intbyparts}

Before stating and proving the integration by parts formulas that will be used in the subsequent analysis, one needs to recall some properties of the Neumann function and the notion of Riesz potentials. These will be crucial to handle the terms containing the electric potential $\phi.$ \par 
The Neumann function $N$ provides a representation formula for the solution of the problem 
\begin{equation*}
\begin{dcases}
 - \Delta \phi = f & \quad \text{in} \ \Omega,\\
 \nabla \phi \cdot \nu = 0 & \quad \text{on} \ \partial \Omega, \ 
\end{dcases}
\end{equation*}
being the counterpart of the Newtonian kernel for the case of a bounded  domain of $\mathbb{R}^d$ with smooth boundary and $d \geq 3$ \cite{kenig1994harmonic}. Its relevant properties which will be used are the following:
\begin{enumerate}[(i)]
 \item $N$ is smooth off the diagonal $\{(x,x) \ | \ x \in \bar{\Omega} \}$,
 \item $N$ is symmetric,
 \item $N(x,z) \leq C |x-z|^{2-d}  ,$
 \item $|\nabla_x N(x,z)| \leq C |x-z|^{1-d}  ,$
\end{enumerate}  
where $C$ is a positive constant. \par 
The Riesz potential of order $\alpha \in \mathopen{]}0,d\mathclose{[}$, $I_\alpha f$, of a function $f : \mathbb{R}^d \to \mathbb{R}$ is given  by
\[I_\alpha f(x) = \int_{\mathbb{R}^d} f(z) |x-z|^{\alpha-d} \ dz  .  \] 
If $f \in L^\gamma(\mathbb{R}^d)$ with $1 < \gamma < d/\alpha,$ then one has \cite{hedberg1972certain}:
\begin{equation} \label{steinprop}
\|I_\alpha f\|_{L^{\frac{d\gamma}{d-\alpha \gamma}}(\mathbb{R}^d)} \leq C \| f \|_{L^\gamma(\mathbb{R}^d)}  ,
\end{equation}
 for some constant $C > 0.$
 

Combining the properties of the Neumann function with estimate (\ref{steinprop}) yields two integration by parts formulas that will be essential in the subsequent analysis. The first formula is proved in \cite[Proposition 4.2]{alves2022relaxation}.
\begin{proposition} \label{propintparts1}
Let $f,g \in L^\gamma(\Omega)$ and $\delta > 0,$ and consider $\phi = N * f/\delta, \ \varphi = N * g/\delta, \ \nabla \phi = \nabla_x N * f/\delta$, and $\nabla \varphi = \nabla_x N * g/\delta$. If $\gamma \geq \frac{2d}{d+2}$, then $\phi, \varphi \in L^{\frac{2d}{d-2}}(\Omega), \ \nabla \phi, \nabla \varphi \in L^{2}(\Omega, \mathbb{R}^d )$, and 
\begin{equation*} 
 \int_\Omega \delta \nabla \phi \cdot \nabla \varphi \, dx = \int_\Omega f \varphi \, dx = \int_\Omega g \phi \, dx = \int_\Omega \int_\Omega f(x)N(x,y)g(y) \, dxdy .
\end{equation*}
In particular, for any $\bar \varphi \in W^{1,\infty}(\Omega)$ it holds that
\begin{equation} \label{intpartsformula1}
\int_{\Omega} \bar \varphi f \, dx = \int_\Omega \delta \nabla \bar \varphi \cdot \nabla \phi \, dx  .  
\end{equation}
\end{proposition}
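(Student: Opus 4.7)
The strategy is to combine the pointwise bounds (iii)-(iv) on the Neumann function with the Riesz potential inequality \eqref{steinprop} to establish the integrability claims, and then reduce all integral identities to a smooth-approximation setting in which classical Green's identity applies. Since $\Omega$ is bounded and $\gamma \geq \gamma_0 := 2d/(d+2)$, the inclusion $L^\gamma(\Omega) \hookrightarrow L^{\gamma_0}(\Omega)$ puts $f,g$ in $L^{\gamma_0}(\Omega)$. Extending by zero to $\mathbb{R}^d$ and using the pointwise bounds on $N$ and $\nabla_x N$, one gets, for $x\in\Omega$,
\[ |\phi(x)| \leq (C/\delta)\, I_2(|f|)(x), \qquad |\nabla \phi(x)| \leq (C/\delta)\, I_1(|f|)(x). \]
A direct computation shows that the endpoint $\gamma_0$ satisfies $d\gamma_0/(d-2\gamma_0) = 2d/(d-2)$ and $d\gamma_0/(d-\gamma_0) = 2$, and the hypotheses $\gamma_0 < d/2$ and $\gamma_0 < d$ of \eqref{steinprop} hold for $d \geq 3$. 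Applying \eqref{steinprop} then yields $\phi \in L^{2d/(d-2)}(\Omega)$ and $\nabla \phi \in L^2(\Omega;\mathbb{R}^d)$, and likewise for $\varphi$.

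With these integrabilities in hand, the triple integral $\int_\Omega\int_\Omega f(x)N(x,y)g(y)\,dydx$ is absolutely convergent, its absolute value being bounded by $\|f\|_{L^{\gamma_0}}\|I_2(|g|)\|_{L^{2d/(d-2)}} \leq C\|f\|_{L^\gamma}\|g\|_{L^\gamma}$ via H\"older and \eqref{steinprop}; Fubini-Tonelli together with the symmetry property (ii) of $N$ then gives $\int f\varphi\,dx = \int g\phi\,dx = \int\int f(x)N(x,y)g(y)\,dxdy$. For the Dirichlet-form identity $\int \delta \nabla \phi \cdot \nabla \varphi\, dx = \int f\varphi\,dx$, I would approximate $f,g$ by $(f_k),(g_k) \subset C_c^\infty(\Omega)$ in $L^\gamma(\Omega)$ and let $\phi_k,\varphi_k$ be the corresponding smooth solutions of the Neumann problem, for which the classical Green identity applies without boundary term (since $\nabla\phi_k\cdot\nu=0$). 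The limit $k\to\infty$ then follows from the continuity of $f\mapsto\nabla\phi$ into $L^2$ and $g\mapsto\varphi$ into $L^{2d/(d-2)}$ inherited from \eqref{steinprop}. The auxiliary identity \eqref{intpartsformula1} comes from the same approximation: for each smooth $f_k$, classical integration by parts gives $\int \bar\varphi f_k\,dx = \int \delta \nabla \bar\varphi \cdot \nabla \phi_k\,dx$, and $k\to\infty$ with the fixed test function $\bar\varphi\in W^{1,\infty}$ closes the argument.

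The main obstacle, and the step requiring the most care, is the matching of exponents at the endpoint $\gamma = 2d/(d+2)$: this specific choice is what places $\nabla\phi$ in exactly $L^2(\Omega)$, so that the Dirichlet-form integral is even meaningful, while simultaneously placing $\phi$ in $L^{2d/(d-2)}$, the dual of $L^{2d/(d+2)}$, so that $\int f\varphi\,dx$ is finite by H\"older. Once this endpoint compatibility is verified and the Riesz potential theory applied at the minimal regularity, the remaining steps reduce to routine bookkeeping of convergences.
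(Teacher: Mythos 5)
Your proposal is correct and follows essentially the same route as the paper's proof, which is simply deferred to \cite[Proposition 4.2]{alves2022relaxation}: the ingredients there --- the pointwise kernel bounds (iii)--(iv), the Riesz potential estimate \eqref{steinprop} applied at the endpoint exponent $\gamma_0=\tfrac{2d}{d+2}$ (whose dual exponent is exactly $\tfrac{2d}{d-2}$), and a smooth-approximation/Green's identity argument --- are precisely the ones you deploy, and they coincide with what the paper itself uses to prove the companion formula \eqref{intpartsformula2}. The only step worth making explicit in your approximation argument is the Neumann compatibility condition: with the standard normalization of $N$ on a bounded domain, $\phi_k = N*f_k/\delta$ satisfies $-\delta\Delta\phi_k = f_k - \tfrac{1}{|\Omega|}\int_\Omega f_k$ together with $\nabla\phi_k\cdot\nu=0$, so the identity $\int_\Omega \delta\nabla\phi\cdot\nabla\varphi\,dx=\int_\Omega f\varphi\,dx$ exactly as stated relies on the zero-mean property of the data, which holds in the intended application where $f=\rho-n$ and $\int_\Omega\rho\,dx=\int_\Omega n\,dx$.
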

The next formula is deduced in \cite[Proposition 4.3]{alves2023role}, where $\Omega$ and the Neumann function are substituted by the whole space $\mathbb{R}^d$ and the Newtonian kernel, respectively. The proof in the present case is essentially the same, and is therefore omitted.   
\begin{proposition}
Under the same conditions of Proposition \ref{propintparts1}, if $\gamma \geq \frac{2d}{d+1}$, then $f \nabla \phi \in L^{1}(\Omega,\mathbb{R}^d)$ and for any $\bar u \in W^{1,\infty}(\Omega, \mathbb{R}^d)$ with $\bar u \cdot \nu = 0$ on $\partial \Omega$ it holds that
\begin{equation} \label{intpartsformula2}
\int_\Omega f \nabla \phi \cdot \bar u \, dx = \int_\Omega \delta \nabla \bar u : \nabla \phi \otimes \nabla \phi \, dx - \int_\Omega \delta (\nabla \cdot \bar u) \tfrac{1}{2}|\nabla \phi|^2 \, dx  .
\end{equation}
\end{proposition}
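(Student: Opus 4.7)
The plan is to establish the identity by reducing to a classical integration by parts via smooth approximation. First I would verify that each integrand in the claimed identity is in $L^1(\Omega)$. The pointwise bound $|\nabla_x N(x,z)| \leq C|x-z|^{1-d}$ from property (iv) of the Neumann function yields $|\nabla \phi(x)| \leq (C/\delta)\, I_1(|\tilde f|)(x)$, where $\tilde f$ is the zero extension of $f$ to $\mathbb{R}^d$. The Hardy--Littlewood--Sobolev estimate \eqref{steinprop} with $\alpha=1$ and exponent $\gamma$ then gives $\nabla \phi \in L^{d\gamma/(d-\gamma)}(\Omega, \mathbb{R}^d)$; at the borderline $\gamma = \tfrac{2d}{d+1}$ this exponent equals $\tfrac{2d}{d-1} = \gamma'$, the Hölder conjugate of $\gamma$. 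Thus $f\nabla \phi \in L^1(\Omega, \mathbb{R}^d)$ by Hölder. Since $\tfrac{2d}{d+1} \geq \tfrac{2d}{d+2}$, Proposition \ref{propintparts1} supplies $\nabla \phi \in L^2(\Omega, \mathbb{R}^d)$, so both $\nabla \bar u : \nabla \phi \otimes \nabla \phi$ and $(\nabla \cdot \bar u)|\nabla \phi|^2$ lie in $L^1(\Omega)$.

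Next I would approximate $f$ by a sequence $f_k \in C^\infty(\bar{\Omega})$ with the same mean as $f$, converging to $f$ in $L^\gamma(\Omega)$, and set $\phi_k = N * f_k/\delta$. Standard elliptic theory renders $\phi_k \in C^2(\bar{\Omega})$, solving $-\delta \Delta \phi_k = f_k$ (modulo the mean) with $\nabla \phi_k \cdot \nu = 0$ on $\partial \Omega$. For these smooth potentials the identity is a direct computation: from the pointwise identity
\[
-(\Delta \phi_k)\,\partial_i \phi_k = -\partial_j\!\bigl(\partial_j \phi_k\, \partial_i \phi_k\bigr) + \partial_i\!\bigl(\tfrac{1}{2}|\nabla \phi_k|^2\bigr),
\]
one multiplies by $\bar u_i$, sums over $i$, integrates over $\Omega$, and applies the divergence theorem. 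The boundary contribution from the first divergence vanishes because $\nabla \phi_k \cdot \nu = 0$, while the one from the second vanishes because $\bar u \cdot \nu = 0$. Multiplying by $\delta$ and substituting $f_k = -\delta \Delta \phi_k$ yields \eqref{intpartsformula2} with $(f,\phi)$ replaced by $(f_k,\phi_k)$.

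Finally I would pass to the limit. Applied to $f - f_k$, the Riesz estimate yields strong convergence $\nabla \phi_k \to \nabla \phi$ simultaneously in $L^2(\Omega, \mathbb{R}^d)$ and in $L^{\gamma'}(\Omega, \mathbb{R}^d)$. The splitting
\[
f_k\, \nabla \phi_k \cdot \bar u - f\, \nabla \phi \cdot \bar u = (f_k - f)(\nabla \phi_k \cdot \bar u) + f\bigl((\nabla \phi_k - \nabla \phi)\cdot \bar u\bigr),
\]
combined with Hölder's inequality in the conjugate pair $(\gamma,\gamma')$, handles the left-hand side. For the right-hand side, the factorization $\nabla \phi_k \otimes \nabla \phi_k - \nabla \phi \otimes \nabla \phi = (\nabla \phi_k - \nabla \phi) \otimes \nabla \phi_k + \nabla \phi \otimes (\nabla \phi_k - \nabla \phi)$ together with the uniform bound $\|\nabla \phi_k\|_{L^2(\Omega)} \leq C$ and $\nabla \phi_k \to \nabla \phi$ in $L^2(\Omega, \mathbb{R}^d)$ delivers convergence of both quadratic terms.

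The main technical point is the simultaneous strong convergence of $\nabla \phi_k$ in $L^2$ (to close the quadratic right-hand side) and in $L^{\gamma'}$ (to pair against $f \in L^\gamma$). Both are consequences of the pointwise estimate (iv) for $\nabla_x N$, which reduces the bounded-domain analysis to the Hardy--Littlewood--Sobolev inequality on $\mathbb{R}^d$ after zero extension, exactly as in the whole-space argument of \cite{alves2023role}; only the boundary-term cancellation in step two is genuinely different, and it is precisely the Neumann boundary condition $\nabla \phi_k \cdot \nu = 0$ built into $N$ that makes it work.
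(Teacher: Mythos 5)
Your proof is correct and follows the paper's route: the first paragraph (the pointwise bound $|\nabla\phi|\leq C\delta^{-1}I_1(|\tilde f|)$ via property (iv) of $N$, followed by the Riesz estimate \eqref{steinprop} and H\"older) is exactly the argument the paper prints for the assertion $f\nabla\phi\in L^1(\Omega,\mathbb{R}^d)$, and your remaining two paragraphs carry out, correctly, the smooth-approximation argument that the paper omits by deferring to \cite[Proposition 4.3]{alves2023role}; the boundary terms you must kill (absent in the whole-space reference) are indeed handled precisely by $\nabla\phi_k\cdot\nu=0$ and $\bar u\cdot\nu=0$. One point deserves to be made explicit: the substitution $f_k=-\delta\Delta\phi_k$ in your second step is only exact when $\int_\Omega f\,dx=0$; for a general $f\in L^\gamma(\Omega)$ the Neumann representation gives $-\delta\Delta\phi_k=f_k-\langle f_k\rangle$, and the discrepancy $\langle f_k\rangle\int_\Omega\nabla\phi_k\cdot\bar u\,dx$ does not vanish in general. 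You gesture at this with ``modulo the mean,'' and the paper's own Proposition \ref{propintparts1} already assumes the same normalization implicitly (it is harmless in the application, where $f=\rho-n$ has zero mean by conservation of mass), but a clean statement should either impose $\int_\Omega f\,dx=0$ or track the mean through the identity.
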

\begin{proof}
We give a proof of the first assertion, {\it i.e.}, $f \nabla \phi \in L^{1}(\Omega,\mathbb{R}^d)$. Set $\gamma_0 = \frac{2d}{d+1}$ and note that $f \in L^{\gamma_0}(\Omega).$
Moreover, it is clear that \[|\nabla \phi(x)| \leq CI_1(|\tilde{f}|)(x)  , \quad x \in \Omega  , \]
where $\tilde{f}$ is the zero extension of $f$ to all of $\mathbb{R}^d.$ Consequently, by (\ref{steinprop}),
\begin{equation*}
\begin{split}
 \|f \nabla \phi\|_{L^1(\Omega)}  & \leq  \|f\|_{L^{\gamma_0}(\Omega)} \|\nabla \phi\|_{L^{\gamma_0^\prime}(\Omega)} \\ 
                                 &  \leq   C \|f\|_{L^{\gamma_0}(\Omega)} \|I_1(|\tilde{f}|)\|_{L^{\gamma_0^\prime}(\Omega)} \\
                                 & \leq   C  \|f\|_{L^{\gamma_0}(\Omega)} \|I_1(|\tilde{f}|)\|_{L^{\frac{d\gamma_0}{d-\gamma_0}}(\mathbb{R}^d)} \\ 
                                 & \leq C  \|f\|_{L^{\gamma_0}(\Omega)} \|\tilde{f}\|_{L^{\gamma_0}(\mathbb{R}^d)} \\
                                & =   C  \| f \|_{L^{\gamma_0}(\Omega)}^2  ,
\end{split}
 \end{equation*}
from which the desired conclusion follows. \par 
\end{proof}

\subsection{Relative energy inequality}
In this section, one presents an inequality satisfied by $\Phi$ for the notions of solutions detailed in Section \ref{weaksolutions} and Section \ref{zemqn}. This inequality is the relative energy inequality for the present case.
\begin{proposition} \label{relenergyinequalityprop}
Given $(\rho, u,n,v),$ $\phi = N*(\rho - n)/\delta,$ a weak solution of (\ref{BEP}) satisfying (\ref{weakdissip}) and with $\gamma \geq \frac{2d}{d+2}$, and given $(\bar{\rho}, \bar{u}, \bar n, \bar v, \bar \phi)$, a strong solution of (\ref{E}), the function $\Phi$ satisfies:
\begin{equation} \label{relenergyinequality}
\Phi(t)-\Phi(0) \leq \Sigma_1(t) + \Sigma_2(t) + \Sigma_3(t) + \Sigma_4(t) + \Sigma_5(t) + \Sigma_6(t) \quad \forall t \in [0,T[ ,
\end{equation}
where $\Sigma_i, \ i=1,2,3$ are as in (\ref{AErelenergyinequality}) and
\begin{equation*}
\begin{split}
\Sigma_4(t) & = - \int_0^t \int_\Omega  \delta (\partial_s \bar \phi) \Delta \bar \phi \,  dx ds  , \\
\Sigma_5(t) & = -  \int_0^t \int_\Omega (\partial_s \bar \phi + \nabla \bar \phi \cdot \bar u)(\rho - n) \,   dx ds  , \\
\Sigma_6(t) & =  \int_0^t \int_\Omega (\rho - n)\nabla \phi \cdot \bar u  \,  dx ds  . 
\end{split}
\end{equation*}
\end{proposition}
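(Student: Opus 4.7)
The plan is to adapt the proof of Proposition \ref{AErelenergyinequalityprop} (which itself follows \cite[Proposition 4.4]{alves2022relaxation}), with two modifications: $\delta > 0$ is now kept as a parameter, and the strong pair $(\bar\rho,\bar u)$ is promoted to an approximate solution of \eqref{BEP} via $\bar n = \bar\rho$, $\bar v = \bar u$, $\bar\phi = H_2^\prime(\bar\rho)$, so that it satisfies \eqref{approxE} with errors $\bar e$ in the fourth equation and $\bar e_0 = -\Delta\bar\phi$ in the Poisson equation. First I would combine the weak energy dissipation \eqref{weakdissip} with the weak formulations of the two continuity and the two momentum equations, tested against $\bigl(-\tfrac{1}{2}|\bar u|^2 + H_1^\prime(\bar\rho)\bigr)\theta$, $\bar u\,\theta$, $\bigl(-\tfrac{1}{2}|\bar v|^2 + H_2^\prime(\bar n)\bigr)\theta$, and $\bar v\,\theta$ respectively, with $\theta$ as in \eqref{theta}. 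Pairing this with the strong equations \eqref{approxE} and letting $\tau\to 0$ produces an intermediate inequality
\begin{equation*}
\Phi(t) - \Phi(0) \ \leq\ \Sigma_1(t) + \Sigma_2(t) + \Sigma_3(t) + P(t),
\end{equation*}
where $P(t)$ collects every contribution that involves the electric potential.

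The assembly of $\Sigma_1$ (the relative kinetic contribution, which yields the $\nabla\bar u$ and $\varepsilon\nabla\bar v$ double products in $\hat\Sigma_1$) and of $\Sigma_2$ (the rearrangement of pressure cross terms into the relative pressures $P_i(\cdot|\bar\cdot)$) is identical to that of Proposition \ref{AErelenergyinequalityprop}, while $\Sigma_3$ comes from the $\varepsilon\bar e$ source in the electron momentum equation in \eqref{approxE}, handled exactly as in the zero-electron-mass case. The novelty lies in $P(t)$. Its raw form, before exploiting any specific feature of the limit equations, is
\begin{equation*}
P(t) = \int_0^t\!\!\int_\Omega \Bigl[-(\partial_s\bar\phi)\bigl((\rho-n)-(\bar\rho-\bar n)\bigr) + (\rho\bar u - n\bar v)\cdot\nabla(\phi-\bar\phi)\Bigr]\,dx\,ds + R(t),
\end{equation*}
where $R(t)$ is the term generated by the error $\delta\bar e_0$ on the right-hand side of the approximate Poisson equation. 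Imposing $\bar\rho = \bar n$ and $\bar v = \bar u$ collapses the first bracketed integrand to $-(\rho-n)\partial_s\bar\phi + (\rho-n)\bar u\cdot\nabla(\phi-\bar\phi)$, which a direct rearrangement matches termwise with $\Sigma_5 + \Sigma_6$. The identification $R(t) = \Sigma_4(t)$ then follows from using $-\delta\Delta\bar\phi = \delta\bar e_0$ when handling the time derivative of $\tfrac{1}{2}\delta|\nabla(\phi-\bar\phi)|^2$ inside $\hat{\mathcal H}$: the error contribution there is exactly the integrand $-\delta(\partial_s\bar\phi)\Delta\bar\phi$ of $\Sigma_4$.

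The main obstacle is justifying the integrations by parts involving $\phi$ itself, which is defined only through the Neumann-function representation $\phi = N\ast(\rho-n)/\delta$ with $\rho-n$ of low regularity (merely $L^\gamma$). In particular, identities of the type $\int_\Omega \delta\nabla\bar\varphi\cdot\nabla\phi\,dx = \int_\Omega \bar\varphi\,(\rho-n)\,dx$, which are needed to reshuffle terms involving $\partial_s\bar\phi$ and $\bar u\cdot\nabla\bar\phi$ between the two sides of the energy identity, cannot be taken for granted and rest on Proposition \ref{propintparts1}, whose range of validity $\gamma \geq \tfrac{2d}{d+2}$ is guaranteed by the stronger hypothesis $\gamma \geq \tfrac{2d}{d+1}$ of the theorem. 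Once these identities are in place and the boundary conditions $\bar u\cdot\nu = \nabla\bar\phi\cdot\nu = 0$ on $\partial\Omega$ are invoked to discard boundary contributions, the assembly $P(t) = \Sigma_4(t) + \Sigma_5(t) + \Sigma_6(t)$ is completed and inequality \eqref{relenergyinequality} follows.
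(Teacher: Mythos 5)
Your proposal is correct and follows essentially the same route as the paper: both adapt the methodology of \cite[Proposition 4.4]{alves2022relaxation} to the approximate system \eqref{approxE} with $\bar\rho=\bar n$, $\bar u=\bar v$, identify $\Sigma_3$ and $\Sigma_4$ with the error terms $\varepsilon\bar e$ and $\delta\bar e_0$, collapse the remaining potential cross terms into $\Sigma_5+\Sigma_6$, and rely on the Neumann-function integration by parts (Proposition \ref{propintparts1}, valid for $\gamma\geq\tfrac{2d}{d+2}$) to justify the manipulations at the low regularity of $\phi$. The paper's own proof is just a terser statement of the same plan, recording the modified energy equality for the strong solution in place of your term-by-term bookkeeping.
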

\begin{proof}
The inequality is obtained using the same methodology as in the proof of \cite[Proposition 4.4]{alves2022relaxation}, where in this case $\bar \rho = \bar n \ , \bar u = \bar v$ and the energy equality satisfied by the strong solution is 
\begin{equation*} \label{strongenergy}
\int_\Omega \mathcal{H}_E(\bar \rho, \bar u) + \varepsilon \tfrac{1}{2}\bar n|\bar v|^2   - \delta\tfrac{1}{2} |\nabla \bar \phi|^2  \, dx \Big|_{s=0}^{s=t}  = \int_0^t \int_\Omega \varepsilon \bar v \cdot \bar e \, dxds + \int_0^t \int_\Omega \delta (\partial_s \bar \phi) \Delta \bar \phi \, dxds  ,
\end{equation*}
since
\[\int_0^t \int_\Omega \varepsilon \bar v \cdot \bar e \, dxds = \int_\Omega \tfrac{1}{2} \bar n |\bar v|^2 \, dx \Big|_{s=0}^{s=t}  , \] and 
\[-\int_0^t \int_\Omega \delta (\partial_s \bar \phi) \Delta \bar \phi \, dxds = \int_\Omega \delta \tfrac{1}{2} |\nabla \bar \phi|^2 \, dx\Big|_{s=0}^{s=t}  , \] 
where $\mathcal{H}_E$ is as (\ref{energyE}).

\end{proof}

\subsection{Bounds for the relative energy}
Under the same conditions as Theorem \ref{maintheorem} it holds that:
\begin{equation} \label{bounds}
\begin{split}
\Sigma_i(t) & \leq C \int_0^t \Phi(s) \, ds , \quad i = 1,2 , \\
\Sigma_3(t) & \leq C\varepsilon t + C \int_0^t \Phi(s) \, ds  , \\
\Sigma_4(t) & \leq C\delta t  , \\
\Sigma_j(t) & \leq C\delta t + C \int_0^t \Phi(s) \, ds  , \quad j = 5,6  .
\end{split}
\end{equation}
The first three terms are treated in the same fashion as in Section \ref{sectionAEbounds}.
Moreover,
\begin{equation*}
\Sigma_4(t) \leq \|(\partial_s \bar \phi) \Delta \bar \phi \|_\infty \int_0^t \int_\Omega \delta \, dxds  \leq C\delta t  .
\end{equation*}

Using the integration by parts formula (\ref{intpartsformula1}) one obtains
\begin{equation*}
\begin{split}
\Sigma_5(t) & = - \int_0^t \int_\Omega \delta \nabla(\partial_s \bar \phi + \nabla \bar \phi \cdot \bar u)\cdot \nabla \phi \,   dx ds \\
& = - \int_0^t \int_\Omega \delta \nabla(\partial_s \bar \phi + \nabla \bar \phi \cdot \bar u)\cdot \nabla (\phi - \bar \phi) \,   dx ds - \int_0^t \int_\Omega \delta \nabla(\partial_s \bar \phi+ \nabla \bar \phi \cdot \bar u)\cdot \nabla \bar \phi \,   dx ds \\
& \leq \int_0^t \int_\Omega \delta \tfrac{1}{2}|\nabla(\partial_s \bar \phi+ \nabla \bar \phi \cdot \bar u)  |^2 + \delta \tfrac{1}{2}|\nabla(\phi - \bar \phi)|^2  \, dx ds + \|\nabla(\partial_s \bar \phi+ \nabla \bar \phi \cdot \bar u)\cdot \nabla \bar \phi \|_\infty \int_0^t \int_\Omega \delta \, dx ds \\
& \leq C\delta t + C \int_0^t \Phi(s) \, ds   ,
\end{split}
\end{equation*}
whereas the integration by parts formula (\ref{intpartsformula2}) yields
\begin{equation*}
\begin{split}
\Sigma_6(t) &  = \int_0^t \int_\Omega \delta \nabla \bar u : \nabla \phi \otimes \nabla \phi  \, dx ds - \int_0^t \int_\Omega \delta (\nabla \cdot \bar u) \tfrac{1}{2}|\nabla \phi|^2 \, dx ds \\
& \leq C \int_0^t \int_\Omega \delta |\nabla \phi|^2 \, dx ds \\
& =  C \int_0^t \int_\Omega \delta |\nabla (\phi - \bar \phi) + \nabla \bar \phi|^2 \, dx ds \\
& = C \int_0^t \int_\Omega  \delta |\nabla(\phi - \bar \phi)|^2 + \delta \nabla(\phi - \bar \phi) \cdot \nabla \bar \phi + \delta |\nabla \bar \phi|^2   \, dxds \\
& \leq C \delta t + C\int_0^t \Phi(s) \,  ds  .
\end{split}
\end{equation*}
\par
Combining (\ref{relenergyinequality}) with (\ref{bounds}) results in 
\[ 
\Phi(t) - \Phi(0) \leq C \int_0^t \Phi(s) \ ds + C(\varepsilon+\delta)t , \quad t \in [0,T[ ,
\]
from which (\ref{stabilityestimate}) follows by Gronwall's inequality. 

\subsection*{Acknowledgments}
NJA would like to thank Rog\'{e}rio Jorge for many useful conversations regarding the physics of the problem. This work was completed while NJA was a PhD student at King Abdullah University of Science and Technology (KAUST).

\end{document}